\numberwithin{equation}{section}
\newcommand{\R}{\mathbb{R}}
\newcommand{\N}{\mathbb{N}}
\renewcommand\O{{\Omega}}
\newcommand{\id}{{\rm \bf{id}}}
\newcommand\p{\partial}
\newcommand\ba{\begin{array}}
\newcommand\ea{\end{array}}
\newcommand\iE{\int_{D\setminus E}}
\newcommand\ipD{\int_{\p D}}
\renewcommand\phi{\varphi}
\newcommand\h{{\mathcal H}^{N-1}}
\newcommand\hh{{\mathcal H}^{1}}
\renewcommand\o{\omega}
\newcommand{\weak}{\rightharpoonup}
\newcommand{\weakst}{\stackrel{\ast}{\rightharpoonup}}
\newcommand{\pscal}[2]{\langle #1, #2 \rangle}
\newcommand{\Hspace}[3]{H_{ #1, #2,  #3}}
\theoremstyle{plain}
\newtheorem{theorem}{Theorem}[section]
\newtheorem{lemma}[theorem]{Lemma}
\newtheorem{proposition}[theorem]{Proposition}
\theoremstyle{definition}
\newtheorem{remark}[theorem]{Remark}
\DeclareMathOperator{\dive}{div}
\def \dis {\displaystyle}
\def \hdiv {\hbox{div}\,}
\def \om {\omega}
\def \Om {\Omega}
\def \ph {\varphi}
\def\LN{{\mathcal L}^{N}}
\def\huk{\hat u_k}
\newcommand{\hur}[1]{{\hat u}_{#1r}}
\newcommand{\hut}[1]{{\hat u_{#1\theta}}}
\def \beq {\begin{equation}}
\def \eeq {\end{equation}}
\def \ba {\begin{array}}
\def \ea {\end{array}}
\definecolor{verde}{RGB}{20,150,100}
\newcommand{\EEE}{\color{black}}
\begin{document}

\title[Two-dimensional velocity optimization]{Velocity optimization of self-equilibrated obstacles in a two-dimensional viscous flow}

\author[G.A. Francfort] {Gilles A. Francfort}

\address[G.A. Francfort]{Flatiron Institute, 162 Fifth Avenue, New York, NY10010, USA}
\email[G. A. Francfort]{gfrancfort@flatironinstitute.org}

\author[A. Giacomini] {Alessandro Giacomini}
\address[A. Giacomini]{DICATAM, Sezione di Matematica, Universit\`a degli Studi di Brescia, 
Via Branze 43, 25123,  Brescia, Italy}
\email[A. Giacomini]{alessandro.giacomini@unibs.it}

\author[S. Weady] {Scott Weady}
\address[S. Weady]{Flatiron Institute, 162 Fifth Avenue, New York, NY10010, USA}
\email[S. Weady]{sweady@flatironinstitute.org}

\begin{abstract}{\scriptsize
An obstacle is immersed in an externally driven 2D Stokes or Navier-Stokes fluid.  We study the self-equilibration conditions for that obstacle under steady state assumptions on the flow. We then seek to optimize the translational and/or angular velocity of the obstacle by varying its shape.  To allow general variations, we must consider a very large class of obstacles for which the notion of trace is meaningless. This forces us to revisit the notion of self-equilibration for both Stokes and Navier-Stokes in a measure theoretic environment.

\vskip.5cm

 \noindent  {\it Mathematics subject classification 2020} :  49J45, 49Q10, 49Q20, 74F10, 76D05, 76D07, 76D55,

 \noindent {\it Keywords} : Stokes, Navier-Stokes, mobility, self-equilibration, capacity}

\end{abstract}

\maketitle

\section{Introduction}

In this  contribution, we aim to consider the issue of ``self-equilibrated" obstacles in a viscous flow. Assume that a rigid obstacle  $E$ is immersed in an incompressible viscous fluid flow (with density $\rho$ and viscosity $\mu$) which thus obeys Navier-Stokes equations.  Classically, the obstacle is placed in a linear flow  such that the far field velocity is of the form  $u=A(x-x_*)+a$ where $A\in \R^{N\times N}$ is trace free (to respect incompressibility), $a\in \R^N$ is a translational velocity  and $x_*$ is some arbitrary point in $E$ \cite{HB2012}.  The  rigid obstacle  will be set in motion because of the drag forces exerted by the fluid. In particular  the  point $x_*\in E$ will possess an acceleration $\gamma_*$ and a  velocity $b$; the solid will also possess an inertia matrix $I_*$ with respect to  that point and an angular velocity $\o$ (which, for $N=2$, we will take to be  the $3$-vector $\o=\o \vec{e}_3$), all of those, $E,x_*, \gamma_*, b,I_*,\o$ being a priori time-dependent. Recall that the inertia matrix  about the point $x_*$ is given, for any vector $\alpha\in \R^N$, by
\beq\label{eq.in-def}
\dis I_*\alpha:=\int_E (|x-x_*|^2\alpha-((x-x_*)\cdot\alpha) (x-x_*))\ dx= \int_E (x-x_*)\wedge(\alpha\wedge(x-x_*))\ dx.
\eeq
\par
The resulting set of equations is (more or less) classically seen to  be
$$
\left\{\begin{array}{lr}
\dis\rho\left\{\frac{\p u}{\p t}+ (u\cdot\nabla) u\right\}-\mu\triangle u+\nabla p-\rho \vec{g}=0,& \mbox{ outside }E\\[3mm]
\hdiv u=0,& \mbox{ outside }E\\[3mm]
u\to  A(x-x_*) +a,&\mbox{ as } |x|\to \infty\\[3mm]
u=\o \wedge (x-x_*)+b,&\mbox{ on }\p E
\end{array}\right.
$$
together with
$$
\left\{\begin{array}{l}
\dis  m_E \gamma_*+\frac{d\o}{dt}\wedge\rho_E \left(\int_E (x-x_*)\ dx\right)+\o\wedge\dis\left[\o\wedge\rho_E \left(\int_E (x-x_*)\ dx\right)\right]\\[3mm]\dis\qquad\qquad \qquad\qquad\qquad\qquad \qquad\qquad \qquad\qquad =\int_{\p E}\sigma\nu d\h+ m_E  \vec{g}
\\[3mm]
\dis I_*\frac{d\o}{dt}+\o\wedge (I_*\o)+\rho_E \left(\int_E (x-x_*)\ dx\right)\wedge \gamma_* \\[3mm]\dis\qquad\qquad \qquad\qquad=\int_{\p E}(x-x_*)\wedge\sigma\nu d\h+\rho_E \left(\int_E (x-x_*)\ dx\right)\wedge \vec{g}
\end{array}\right.
$$
where $ m_E :=\rho_E \LN(E)$ is the (time-independent) mass of the obstacle, $\rho_E$ its density and where $\vec{g}:=-g\vec{e}_3$ is gravity.  The point $x_*$ is not necessarily the center of mass,  and this gives rise to the additional acceleration terms on the left hand side.  Of course,  the system would have to be complemented by appropriate initial conditions for  the velocity $u$ of the fluid,  the initial   position of  $x_*$ and an initial velocity for $b$, as well as an initial  angular velocity for $\o$.

\medskip

In the sequel we address two different problems. First, we consider the classical Stokes {\it mobility problem}  for a force-free obstacle in a background flow. This setting commonly arises in the study of particle suspensions, such
as transport in pressure-driven pipe flow, where an external velocity is prescribed and the goal is to determine the particle motion  \cite{HB2012}.  In view of the linear character  of the Stokes equations, this problem is also closely related to the self-propulsion of micro-organisms whose surface velocity is prescribed; see e.g. \cite{Lauga2009, Galdi99}.  Here we assume that the Reynolds number of the fluid  (essentially $\rho/\mu$) is very small so that inertial effects are neglected, and the obstacle is neutrally buoyant, that is its mass density is assumed to be that of the surrounding fluid. In that case, upon taking the point $x_*$ to be at the origin, the system becomes
\beq\label{eq.mob}
\left\{\begin{array}{lr}
-\mu\triangle u+\nabla p=0,& \mbox{ outside }E\\[3mm]
\hdiv u=0,& \mbox{ outside }E\\[3mm]
u\to Ax+a,&\mbox{ as } |x|\to \infty\\[3mm]
u=\o\wedge x+b,&\mbox{ on }\p E,
\end{array}\right.
\eeq
 together with
\beq\label{eq.selfequi}
\left\{\begin{array}{rcl}\int_{\p E}\sigma\nu d\h&=&0\ \\[3mm]\int_{\p E}x\wedge\sigma\nu d\h&=&0.\end{array}\right.
\eeq
The solution to this problem depends only on the domain $E$ and on the far-field flow $Ax + a$. While in principle the domain will change in time, we restrict our attention  to the {\it quasi-static} approximation in which the unknown fields $u$, $\o$, $b$ and the domain $E$ are viewed as constant in time and the goal is to find $\o,b$ such that the system \eqref{eq.mob}, \eqref{eq.selfequi} is solvable in $u$.

Second, we consider the Navier-Stokes mobility problem under the assumption of negligible acceleration of the obstacle, but retain the nonlinearity in the hydrodynamic equations.   Then in particular $\gamma_*=0$ and $d\o/dt=0$. We again  use a  quasi-static approximation for which $u, \omega, b$ and $E$ are viewed as constant in time,  this time  in a reference frame moving with the rigid body velocity $b$.  Doing so  is no capricious whim,   but rather because our proof of the existence of a self-equilibrated motion of the obstacle in Subsection \ref{subsec:ses-navier} collapses in a fixed reference frame.  
In this setting the boundary condition at infinity has become $u\to A(x-x_*)+a-b$ while that on $\p E$ is still $u=\o\wedge (x-x^*)$.

All of this results in the following system (at a fixed time $t$): 
\beq\label{eq.qs-fluid}
\left\{\begin{array}{lr}
\rho (u\cdot\nabla) u-\mu\triangle u+\nabla p- \rho \vec{g}=0,& \mbox{ outside }E\\[3mm]
\hdiv u=0,& \mbox{ outside }E\\[3mm]
u\to  A(x-x_*)+a-b,&\mbox{ as } |x|\to \infty\\[3mm]
u=\o\wedge (x-x_*),&\mbox{ on }\p E
\end{array}\right.
\eeq
with $A,a,\o, b$ constant, together with
\beq\label{eq.qs-solid}
\left\{\begin{array}{lr}
\dis 0=\int_{\p E}\sigma\nu d\h-\o\wedge\dis\left[\o\wedge\rho_E \left(\int_E (x-x_*)\ dx\right)\right]+ m_E \vec{g}
\\[3mm]
\dis 0=\int_{\p E}(x-x_*)\wedge\sigma\nu d\h+(I_*\o)\wedge\o+\rho_E \left(\int_E (x-x_*)\ dx\right)\wedge \vec{g}.
\end{array}\right.
\eeq
Once again the goal is to find $\o,b$ such that the system  \eqref{eq.qs-fluid}, \eqref{eq.qs-solid} is solvable in $u$. Observe that we could equally choose the point $x_*$ to be outside the obstacle, thereby imposing  that the obstacle rotates at constant angular velocity around that point.
\par
In both settings we say that the obstacle is self-equilibrated.

\medskip
 Before we proceed further, we must clarify and qualify the preceding formulation. In  our initial writing of the coupled Navier-Stokes/rigid solid system, we chose an inertial fixed reference frame. Since the Navier-Stokes equations are invariant under a Galilean change of frame, we are free to write that system in a frame that moves at constant velocity, for example at velocity $b$. However, imposing time-independence of the fluid velocity in that new frame is {\it not} equivalent to the same condition in the original fixed reference frame. 

Similarly, we could write the equations in a non-inertial frame in which the solid is at rest, providing yet another time-independence of the fluid velocity.   Doing the latter raises the following  issue: Do the Navier-Stokes equations still hold unchanged in a non-inertial framework at the expense of accommodating the centrifugal and Coriolis accelerations? Our ``wille zur macht" seems to vindicate  the assertion but the resulting system which involves centrifugal and Coriolis terms in the Navier-Stokes equation is more intricate than that in the translating frame while boundary conditions become more challenging. This is why   we confine the non-inertial framework to the     related comment in Remark \ref{rem.wzm}.  \EEE

\medskip

Galdi \cite{Galdi99}  established  that a solution to the (first) mobility problem exists   for smooth enough obstacles, that is that one can find $(\o,b)$ such that the resultant and torque of the forces that the fluid applies to $E$ are $0$. We will revisit this question in Section \ref{sec.galdi} and show that one can generalize Galdi's results to obstacles that are (essentially) merely compact sets  (Theorem \ref{thm.Gg} in any dimension).
\par
In Subsection \ref{subsec:ses-navier}, we will return to the second (quasi-static) problem and show that one can find a solution $u$ to Navier-Stokes and a pair $(\o,b)$ such that, {\it provided that the obstacle $E$ is neutrally buoyant}, the resultant and torque of the forces that the fluid applies to $E$ satisfy \eqref{eq.qs-solid} for obstacles that are once more (essentially) compact sets. 
\par
Remark that the  system  we study in Section \ref{sec.galdiNS} differs from that studied in \cite{Galdi99} in  particular because we do not write the equations in a frame attached to the body.  As already stated, although  doing so has the advantage of leaving the obstacle $E$ fixed, it begs other modeling questions that we do not wish to confront in this work.
\vskip10pt
In both settings, the optimization  problem we wish to address next  is the following: can we find an obstacle $E$ for which the velocity torsor $(\o,b)$ is as small, or as large as possible. In truth, we would like to find a shape for the obstacle for which that torsor is such that $\o=0$. This would imply that, under a steady shear flow, the obstacle only translates. This we cannot do for now, except for a very degenerate 2d mobility problem; see Section \ref{sec.example}.

In Section \ref{sec.min}  we will establish (Theorem \ref{thm:shape} and Remark  \ref{rem:constraint}) that, modulo mild restrictions on the obstacle, we can always find an optimal shape for both the minimization or the maximization problem for that torsor for the   mobility problem. In Subsection \ref{subsec:stab-navier},we will do the same for the quasi-static problem (Theorem \ref{thm:shape-navier}). The methods for achieving those results differ significantly. In the Stokes case
we make full use of the linearity of the problem and of its variational nature, while in the Navier-Stokes case we use a fixed point argument of the kind used for the elementary proofs of existence. Both settings are further complicated by the non-smoothness of the obstacle which forces us to appeal to various measure theoretic arguments. 

Our results will hold true under two important caveats. First, we will confine our investigation to a large box $D$ so that the  far field flow will be imposed on $\p D$ and not at infinity. This is so because we wish to avoid the added complexity of  incompressible inviscid  flows in infinite domains and related issues like the Stokes paradox (see e.g. \cite[Chapter V]{Galdi-book}). We also note that nothing in our analysis is contingent upon the consideration of a  linear flow on $\p D$. Any boundary condition of the form $u=\psi$ with $\psi\in H^1(D;\R^N)$ and ${\rm div\;}\psi=0$ will do.  Consequently, the third relation in \eqref{eq.qs-fluid} and \eqref{eq.mob}, respectively, will be replaced by
\beq\label{eq.gbc}
u= \psi -b \;\mbox{in Section \ref{sec.galdiNS} }\mbox{(resp. $u=\psi$ in Sections \ref{sec.galdi}, \ref{sec.min})}\mbox{ on }\p D, 
\eeq
 with  $\psi\in H^1(D;\R^N){\rm \;and\; div\;}\psi=0$. 
 
 Second and more significantly, while we can revisit Galdi's problem in any dimension (generalizing the notion of torque)  for general geometries of the obstacle,  only in the two-dimensional case can we take advantage of such generality when going to shape optimization  (see the comments at the beginning of Section \ref{sec.min}).  This is so because our analysis relies on a  result of Sverak \cite{Sverak93}, itself based on purely two-dimensional arguments; see our stability result Theorem \ref{thm:stability} and the proof of Theorem \ref{thm:shape-navier}.
 
Our results are  only a first step even in a two dimensional setting. Whether one could achieve $\o=0$ for any flow of the form $u=Ax+a$ on $\p D$ remains unclear although  we give an affirmative answer through the example of a horizontal needle in Section \ref{sec.example} in the case of a pure shear flow, that is when $a=0,\;A=\left(\begin{smallmatrix}0&1\\[1mm]0&0\end{smallmatrix}\right)$. In that section, we also compute the angular velocity for a disk and a vertical needle when subjected to the same pure shear flow. All of those are achieved in the setting of the Stokes mobility problem.

In a subsequent step, one should investigate the topological regularity of the possible minimizers or maximizers which, in view of the result of in Section \ref{sec.example} may prove quite challenging from the standpoint of the classical methods that use the concept of shape derivative.

\medskip

Notationwise, we define, for any $a,b\in \R^d$, the commutator
$[a,b]:= a\otimes b-b \otimes a$,  an element of $\R^{d(d-1)/2}$ . We denote the identity matrix on $\R^d$ by $\id$. The Euclidean scalar product on $\R^d$ and the Frobenius inner product on $\R^{d\times d}$ are both denoted by $\cdot$\;. The basis $\vec{e}_i,i=1,...,N,$ is the canonical orthonormal basis of $\R^N$ with $\vec{e}_3$ pointing upwards if $N=3$ and gravity is involved. By $1_E$ we denote the characteristic function of the set $E$, by $B(x,r)$ the open ball of center $x$ and radius $r$ in the ambient space $\R^N$  and by $1$  the $d$-vector with entries $1$ with $d=1,N$ or $N(N-1)/2$. 

For constants, the notation $C_{F}$ means that the constant, whose value may change from line to line, is only a function of the  quantity $F$.\

By ${\rm cap}_2$ we denote the $2$-capacity, an outer measure on $\R^N$; see e.g. \cite[Section 4.7]{EG}. 

We also introduce the following notation 
\beq\label{eq.duality}\begin{cases}\langle f, u\rangle_{\partial E}:=  \sum_{i=1}^d \langle f_i, u_i\rangle_{H^{-1/2}(\p E),H^{1/2}(\p E)}\vec{e}_i,\\ \\
 \langle f, u\rangle_{\partial D}:= \sum_{i=1}^d\langle f_i, u_i\rangle_{H^{-1/2}(\p D),H^{1/2}(\p D)}\vec{e}_i 
\end{cases}\eeq 
for $ f\in H^{-1/2}(\p E;\R^d), u\in H^{1/2}(\p E;\R^d)$ and $ f\in H^{-1/2}(\p D;\R^d), u\in H^{1/2}(\p D;\R^d)$ respectively.
 In what follows  $d$ will be $1,N$, or $N(N-1)/2$, depending on the context.\EEE

The rest is standard notation.

\section{A generalized setting -- The Stokes case}\label{sec.galdi}
In this section we prove a Galdi type theorem \cite{Galdi99} for compact obstacles $E$  lying inside a large $N$-dimensional open simply connected bounded  box $D$ with sufficiently smooth boundary. The fluid boundary condition on $\p D$ will be given by \eqref{eq.gbc}.

  We first consider in Subsection \ref{sub.Lip} the case where $E$ is the closure of a domain with Lipschitz boundary, so that the Stokes equations can be classically solved and the conditions of self-equilibrium involve integrations on regular boundaries. This  will in turn suggest the proper generalization of the problem in the case of general obstacles which we address in Subsection \ref{sub.Gen}.

\subsection{Obstacles with Lipschitz boundary}\label{sub.Lip}
 Assume that $E\subset D$ compact is the closure of a Lipschitz domain.

\begin{theorem}[\bf Galdi's theorem]\label{thm.G} 
 Let $D\subset \R^N$ be open bounded with Lipschitz boundary and $\psi \in H^1(D;\R^N)$ with ${\rm div\;}\psi=0$. Let $E\subset D$ be the closure of a Lipschitz open set and \EEE
consider $(u_{E,\O,b}, p_{E,\O,b}) \in H^1(D\setminus E;\R^N)\times L^2(D\setminus E)$   the solution to Stokes equation 
\beq\label{eq.stokes}
\begin{cases}
-\mu\triangle u+\nabla p=0\\[3mm]
\dive u=0
\end{cases}\eeq
on $D\setminus E$ with boundary conditions 
\beq\label{eq.abc}
\begin{cases}
u= \psi  &\mbox{ on }\p D\\[3mm]
u=\Omega x+b &\mbox{ on }\p E,
\end{cases}\eeq
 where $(\Omega,b)\in \R^{N\times N}\times \R$ with $\Om$ skew-symmetric.
\par
Then there exists a  unique  pair $(\Omega_E, b_E)$ such that, setting 
$$
(u_E,p_E):=(u_{E, \O_E, b_E}, p_{E, \O_E, b_E}),
$$ 
the associated stress $\sigma_{E}:= 2\mu e(u_{E})-p_{E}\;\! \id$  
satisfies
\beq\label{eq.selfeq}
\langle\sigma_{E}\nu,1\rangle_{ \p E}=0\qquad\text{and}\qquad
\langle[\sigma_{E}\nu, x], 1\rangle_{ \p E}=0
\eeq
 where $\nu$ is the exterior normal to $\p E$ and $\langle\cdot,\cdot\rangle_{\p E}$ is defined in \eqref{eq.duality}.\EEE
\end{theorem}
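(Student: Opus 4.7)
The plan is to leverage the linearity of Stokes to recast the conditions \eqref{eq.selfeq} as an invertibility question for a linear map on the finite dimensional space of rigid motions. For fixed skew $\Omega$ and $b\in\R^N$, standard theory (lifting the boundary data to a divergence-free $H^1$ extension in $D\setminus E$, then applying Lax-Milgram on divergence-free test fields) produces a unique $u_{E,\Omega,b}$ and a pressure $p_{E,\Omega,b}$ unique up to an additive constant; the stress trace $\sigma_{E,\Omega,b}\nu$ is then a well defined element of $H^{-1/2}(\p E;\R^N)$ via the Green identity, and both $\langle\sigma\nu,1\rangle_{\p E}$ and $\langle[\sigma\nu,x],1\rangle_{\p E}$ are insensitive to the pressure constant by Gauss's theorem. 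By linearity, $u_{E,\Omega,b}=u_{E,0,0}+v_{\Omega,b}$ with $v_{\Omega,b}$ the Stokes flow vanishing on $\p D$ and equal to $\Omega x+b$ on $\p E$, depending linearly on $(\Omega,b)$. Consequently the map
\[
\Phi:(\Omega,b)\mapsto\big(\langle\sigma_{E,\Omega,b}\nu,1\rangle_{\p E},\ \langle[\sigma_{E,\Omega,b}\nu,x],1\rangle_{\p E}\big)
\]
is affine from a space of dimension $N(N+1)/2$ to a space of the same dimension. Writing $\Phi=\Phi_0+\Phi(0,0)$, it suffices to show the linear part $\Phi_0$ is invertible; then $(\Omega_E,b_E)=-\Phi_0^{-1}\Phi(0,0)$ is the unique solution.

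The key is a Green-type variational identity for $v_i:=v_{\Omega_i,b_i}$, $i=1,2$. Since $\dive\sigma_i=0$, $\dive v_i=0$, and $v_i=0$ on $\p D$, one obtains
\[
\int_{D\setminus E}2\mu\,e(v_1):e(v_2)\,dx=-\langle\sigma_1\nu,v_2\rangle_{\p E},
\]
the minus sign reflecting that $\nu$ points out of $E$, hence into $D\setminus E$. Using $v_2|_{\p E}=\Omega_2 x+b_2$ together with the skew-symmetry of $\Omega_2$ to rewrite $\Omega_2:(\sigma_1\nu\otimes x)=\tfrac12\Omega_2:[\sigma_1\nu,x]$, this becomes
\[
\int_{D\setminus E}2\mu\,e(v_1):e(v_2)\,dx=-b_2\cdot F_1-\tfrac12\Omega_2:T_1,
\]
where $(F_1,T_1)=\Phi_0(\Omega_1,b_1)$. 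Symmetry of the left-hand side in $(1,2)$ shows that $-\Phi_0$ is self-adjoint for the natural pairing $\llangle(\Omega,b),(T,F)\rrangle:=\tfrac12\Omega:T+b\cdot F$, and choosing $(\Omega_1,b_1)=(\Omega_2,b_2)$ identifies the quadratic form with the Dirichlet energy of $v_{\Omega,b}$, so $-\Phi_0$ is positive semidefinite.

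For strict definiteness, suppose the quadratic form vanishes. Then $e(v_{\Omega,b})\equiv 0$ on $D\setminus E$, so $v_{\Omega,b}$ is a rigid motion on each connected component. On the component whose boundary meets $\p D$ (which exists since $E\subsetneq D$), the vanishing trace on $\p D$ forces this rigid motion to be identically zero; then on the adjacent portion of $\p E$, which contains $N+1$ affinely independent points because $E$ is the closure of a nonempty Lipschitz open set, the identity $\Omega x+b\equiv 0$ forces $\Omega=0$ and $b=0$. Hence $\Phi_0$ is invertible and the theorem follows. The main technical delicacy will be the rigorous Green identity in the $H^{-1/2}/H^{1/2}$ duality, which I would establish by density from smooth compactly supported approximations of the test fields, along with the verification that the pressure-constant indeterminacy is genuinely harmless on the force and torque.
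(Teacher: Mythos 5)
Your proposal is correct and follows essentially the same route as the paper: both reduce the problem by linearity to the invertibility of a linear map on the $N(N+1)/2$-dimensional space of rigid motions, and both establish injectivity via the same energy identity $2\mu\int_{D\setminus E}|e(v)|^2\,dx=-\langle\sigma\nu,\Omega x+b\rangle_{\partial E}$ followed by the rigidity argument on the component of $D\setminus E$ touching $\partial D$. The paper phrases the linear algebra as linear independence of the force--torque vectors generated by elementary rigid boundary data rather than as positive definiteness of the quadratic form, but this is only a cosmetic difference.
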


\begin{proof}
 We divide the proof into three steps.

\vskip10pt\noindent{\bf Step 1: An auxiliary problem.} In this step, we establish that, if  $\psi=0$ and if \eqref{eq.abc}, \eqref{eq.selfeq} holds true, then 
\beq\label{eq.zero}
\O=0 \qquad\text{and}\qquad b=0.
\eeq

Indeed, testing equation \eqref{eq.stokes} by $u:=u_{E,\O,b}$  (with associated stress $\sigma:=\sigma_{E,\O,b}$) we obtain
\begin{multline*}
0= 2 \mu \iE |e(u)|^2\ dx+ \langle \sigma\nu, (\Omega x+b)\rangle_{ \p E}\\= 2 \mu \iE |e(u)|^2\ dx+ \langle [\sigma\nu,x],1 \rangle_{ \p E}\cdot\O+ \langle \sigma\nu,1\rangle_{ \p E}\cdot b= 2 \mu \iE |e(u)|^2\ dx,
\end{multline*}
which implies that $e(u)\equiv 0$ in the open connected component $\hat D$  (with Lipschitz boundary) of $D\setminus E$ whose boundary contains $\p D$.  On that component $u$ must be  an infinitesimal  rigid body motion of the form $\bar\O x+\bar b$, with
$\bar \O$ skew-symmetric. But $u=0$ on $\p D$ 
 so that $\bar \O=0$ and $\bar b=0$.  Since $u=\O x+b$ on $\partial \hat D\cap \partial E$, we conclude that  $\O=0$ and $b=0$.

\medskip

\noindent{\bf Step 2:  A family of  elementary problems.} Take now  $\psi=0$ and $\O^{ij}_{kl}=-\O^{ij}_{lk}=\delta_{ik}\delta_{jl}, i\ne j, \; b=0$, or else $b^i=\vec{e}_i, \O=0$ with  $i,j=1,\dots, N$ . As $i,j$ vary from $1$ to $N$, this defines $ {N(N+1)}/{2}$ Stokes problems with solution pairs $(u,p)$ and their associated stress $\sigma=2\mu e(u)-p\;\!\id$.
We claim that the associated $ {N(N+1)}/{2}$ pairs $(\langle \sigma\nu,1\rangle_{\p E }, \langle [\sigma\nu,x],1\rangle)_{ \p E}$ are independent vectors in $\R^{ {N(N+1)}/{2}}$ and thus form a basis of $\R^{ {N(N+1)}/{2}}$ which we will denote by $\{F_k\}$.

Indeed, otherwise a non trivial linear combinations of those would be nul. But, by linearity, this would generate a non-zero pair $(\O,b)\in \R^{N\times N} \times \R^N$ with $\O$ skew-symmetric such that the  auxiliary problem of Step 1 is satisfied which contradicts \eqref{eq.zero}.

\medskip

\noindent{\bf Step 3.}  Solve Stokes equation on $D\setminus E$ under the boundary conditions \eqref{eq.abc} and $\Om=0$ and $b=0$ on $\p E$.  This will yield a solution  pair $(u,p)$ with associated stress $\sigma$ generating a $ {N(N+1)}/{2}$-vector $F:=(\langle\sigma\nu,1\rangle_{ \p E},\langle [\sigma\nu,x],1\rangle_{ \p E})$. 
\par
 In view of  Step 2, there exists a  unique ${N(N+1)}/{2}$-vector  $(\alpha_1, ..., \alpha_{  {N(N+1)}/{2}})$
 such that 
 \beq\label{eq.int1}
 F=\sum_{k=1}^{ {N(N+1)}/{2}} \alpha_k F_k,
 \eeq  
  where $\{F_k\}$ is the basis defined in Step 2.
 To that linear combination, we associate the corresponding linear combination of the $b^i$'s and $\O^{ij}$'s which yields a pair $\bar b,\bar \O$ such that the solution $(\bar u,\bar p)$ to Stokes with boundary conditions $\bar u=0$ on $\p D$ and $\bar u= \bar \O x+\bar b$ on $\p E$ yields a $ {N(N+1)}/{2}$-vector $(\langle\bar \sigma\nu,1\rangle_{ \p E},\langle [\bar \sigma\nu,x],1\rangle_{ \p E})$ equal to $F$.
 
 But then $(\check u:= u-\bar u, \check p:=p-\bar p)$ is the solution to Stokes equation \eqref{eq.stokes} with boundary conditions $\check u=\psi$ on $\p D$ and $\check u=-\bar \O x-\bar b$ on $\p E$ and, in view of  \eqref{eq.int1}, it is self-equilibrated.
 \end{proof}
 
 \begin{remark}\label{rk.switch}
   By the divergence theorem, \eqref{eq.selfeq} equivalently reads as
$$
 \begin{cases}
 \langle\sigma_E\nu,1\rangle_{ \p D}=0,\\[3mm]
\langle[\sigma_E\nu, x], 1\rangle_{ \p D}=0,
\end{cases}
 $$
 which has the distinct advantage of integrating over the boundary of $D$ in lieu of that of the obstacle $\p E$.
 \hfill\P
 \end{remark}

 \subsection{The general case}\label{sub.Gen}
 With the help of   Remark \ref{rk.switch}, we are at liberty to do away with the Lipschitz regularity of $\p E$ and to only assume that

\beq\label{eq.domain}
E\subset D  \mbox{ is a compact  set with } {\rm cap_2}(E)>0.
\eeq
 The resulting open fluid  domain $D\setminus E$ may thus have a very ``rough"  or singular  boundary near $E$. 
 \par
To lend  meaning to the Stokes problem \eqref{eq.stokes} with boundary conditions \eqref{eq.abc},
we introduce the following  subset of $H^1(D)$
\begin{multline}
\label{eq:Hob}
 \Hspace{E}{\O}{b}(D):=\text{Closure}_{H^1(D;\R^N)}\{\phi \in C^\infty(\bar D;\R^N): \dive\phi=0,\\
 \phi \equiv (\O x+b)  \mbox{ in a neighborhood of }E\}
\end{multline}
 
and interpret the problem as
\begin{equation}
\label{eq:Stokes-gen}
\begin{cases}
(u,p)\in \Hspace{E}{\O}{b}(D)\times L^2_{loc}(D\setminus E) &\\[3mm]
-\mu\triangle u+\nabla p=0 &\text{in }D\setminus E\\[3mm]
u=\psi &\text{on }\partial D.
\end{cases}
\end{equation}

 A solution $(u_{E,\O,b}, p_{E,\O,b})$ is given after the following remark.

 \begin{remark}\label{rem.equiv-sp} An equivalent way of defining the space $\Hspace{E}{\O}{b}(D)$ consists in imposing
 the condition $\phi = (\O x+b)$ solely on the obstacle $E$, that is defining $\Hspace{E}{\O}{b}(D)$ as
 $$
 \Hspace{E}{\O}{b}(D):= \text{Closure}_{H^1(D;\R^N)}\{\phi \in C^\infty(\bar D;\R^N): \dive\phi=0,\\
 \phi \equiv (\O x+b)  \mbox{ on }E\}.
 $$
 This highly non trivial result is a direct consequence of the following. In view of  \cite[Theorem 5.11]{KLV}, 
 if $u$ is the $H^1$-limit of a sequence $\ph_n\in C^\infty(\bar D;\R^N)$
 with $\ph_n= \O x+b$ on $E$ then $u-(\O x+b)=0 \;{\rm cap_2 \; a.e.\; on \;}E$. But then, according to 
\cite[Theorem 9.1.3]{AH}, $\ph (u-(\O x+b))\in H^1  _0(D\setminus E;\R^N)$ for any 
$\ph \in C^\infty_0(D;\R^N)$ which is trivially equivalent to our definition \eqref{eq:Hob} of the space
$\Hspace{E}{\O}{b}(D)$.
 \hfill\P\end{remark}

 \medskip

 First, the fluid velocity field  is the unique minimizer $u_{E,\O,b}$ of 
\beq\label{eq.minsbc}
 \min\left\{\int_{D\setminus E}|\nabla u|^2\ dx\,:\, u\in  \Hspace{E}{\O}{b}(D): u= \psi  \mbox{ on }\p D\right\}.
\eeq
 From \eqref{eq.minsbc}  the Stokes equation is recovered. Indeed,  the Euler-Lagrange equation immediately implies that $\Delta u_{E,\Om,b}\in H^{-1}(D\setminus E;\R^N)$ vanishes on smooth divergence free vector fields with compact support in $D\setminus E$. By De Rham's theorem which holds true within a   distributional framework (see e.g. \cite{W}),  there exists a distribution $p_{E,\Om,b}\in {\mathcal D}'(D\setminus E)$ such that  
$$
\mu \Delta u_{E,\Om,b}=\nabla p_{E,\Om,b}\qquad\text{in }D\setminus E.
$$
In view of Lemma \ref{lem:lions}  below,  
$p_{E,\Om,b}\in L^2_{loc}(D\setminus E)$, with $p_{E,\Om,b}$ square summable near $\partial D$. If $D'$ is a Lipschitz open set such that $E\subset\subset D'\subset\subset D$, by adding a suitable constant to $p_{E,\Om,b}$ on the connected component of $D\setminus E$ which contains $\partial D$ , we can also  assume, thanks to \eqref{eq:lions} in Lemma \ref{lem:lions}, that 
\begin{equation}
\label{eq:est-p}
\|p_{E,\O,b}\|_{L^2(D\setminus \overline D')}\le C_{D\setminus \overline D'} \|u_{E,\Om,b}\|_{H^1(D;\R^N)}.
\end{equation}
Note that  the divergence-free stress $\sigma_{E,\Om,b}:= 2 \mu e(u_{E,\Om,b})-p_{E,\Om,b} \id$ is such that the quantities  
$\langle\sigma_{E,\Om,b} \nu,1\rangle_{ \p D}$ and $\langle[\sigma_{E,\Om,b} \nu, x], 1\rangle_{ \p D}$
defined through the first equation in \eqref{eq.duality} are meaningful.
\par
 Lemma \ref{lem:lions} is a well-known result but we could not locate a proof in its full generality, which is why we provide a proof below. 

\begin{lemma}
\label{lem:lions}
Let $U \subset \R^N$ be an open bounded set with Lipschitz boundary. Let $p\in {\mathcal D}'(U)$ be such that $\nabla p\in H^{-1}(U;\R^N)$. Then $p\in L^2(U)$ and there exists a constant $C_U$ such that
$$
\|p\|_{L^2(U)}\le C_U\left[ \|p\|_{H^{-1}(U)}+\|\nabla p\|_{H^{-1}(U;\R^N)}\right].
$$
 and
 \begin{equation}
\label{eq:lions}
\left\|p-\fint_U p\,dx\right\|_{L^2(U)}\le C_U\|\nabla p\|_{H^{-1}(U;\R^N)}.
\end{equation}
\end{lemma}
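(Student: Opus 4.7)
The plan is to reduce the statement to the surjectivity of the divergence operator $\dive\colon H^1_0(U;\R^N)\to L^2_0(U)$ (mean--zero $L^2$ functions), which on a bounded Lipschitz domain admits a continuous right inverse $q\mapsto v_q$ satisfying $\|v_q\|_{H^1_0}\le C_U\|q\|_{L^2}$ --- this is Bogovskii's operator and is precisely where the Lipschitz regularity of $\partial U$ enters. I may assume $U$ connected, reducing the general case to components since a bounded Lipschitz open set has finitely many of them.

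The key step is to introduce the linear functional $L\colon L^2_0(U)\to\R$ defined by $L(q):=-\langle\nabla p, v_q\rangle_{H^{-1},H^1_0}$. Its well-definedness requires that $\langle\nabla p,\cdot\rangle$ vanish on every divergence-free element of $H^1_0$; this I would establish by combining the tautological identity $\langle\nabla p,\phi\rangle=-\langle p,\dive\phi\rangle=0$ for $\phi\in C^\infty_c(U;\R^N)$ with $\dive\phi=0$, with the density of such test fields in the divergence-free subspace of $H^1_0$ on a Lipschitz domain. Once $L$ is well-defined, $|L(q)|\le C_U\|\nabla p\|_{H^{-1}}\|q\|_{L^2}$, so Riesz representation yields $\tilde p\in L^2_0(U)$ with $L(q)=\int_U\tilde p\,q\,dx$ and $\|\tilde p\|_{L^2}\le C_U\|\nabla p\|_{H^{-1}}$.

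I would then show $\nabla(p-\tilde p)=0$ in $\mathcal D'(U)$: for any $\phi\in C^\infty_c(U;\R^N)$, the choice $v_{\dive\phi}=\phi$ yields
$$
\langle\nabla p,\phi\rangle=-\langle p,\dive\phi\rangle=-L(\dive\phi)=-\int_U\tilde p\,\dive\phi\,dx=\langle\nabla\tilde p,\phi\rangle.
$$
By connectedness $p-\tilde p=c\in\R$; in particular $p\in L^2(U)$ and $c=\fint_U p\,dx$ since $\tilde p$ has mean zero, so $\|p-\fint_U p\,dx\|_{L^2}=\|\tilde p\|_{L^2}\le C_U\|\nabla p\|_{H^{-1}}$, which is \eqref{eq:lions}. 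For the first estimate, I would fix once and for all some $\phi_0\in C^\infty_c(U)$ with $\int_U\phi_0\,dx=1$ and write $c=\langle p,\phi_0\rangle-\int_U\tilde p\,\phi_0\,dx$; estimating each term yields $|c|\le C_U(\|p\|_{H^{-1}}+\|\nabla p\|_{H^{-1}})$, and $\|p\|_{L^2}\le\|\tilde p\|_{L^2}+|c||U|^{1/2}$ gives the full bound. The main technical obstacle is the classical pair of facts underlying the whole argument --- existence of a bounded Bogovskii right inverse and the density of smooth compactly supported divergence-free fields in the divergence-free subspace of $H^1_0$ --- both of which genuinely rely on the Lipschitz assumption on $\partial U$.
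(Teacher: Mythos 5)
Your argument is correct and rests on the same key ingredient as the paper's, namely Bogovskii's bounded right inverse of the divergence on a bounded Lipschitz domain, but it is organized differently. The paper tests $p$ directly against an arbitrary $\varphi\in C^\infty_c(U)$, writes $\varphi-\psi\int_U\varphi\,dx=\dive F_\varphi$ with $F_\varphi\in C^\infty_c(U;\R^N)$, and reads off from $\pscal{p}{\varphi}=-\pscal{\nabla p}{F_\varphi}+\pscal{p}{\psi}\int_U\varphi\,dx$ that $\varphi\mapsto\pscal{p}{\varphi}$ is $L^2$-continuous; Riesz then gives $p\in L^2(U)$, and both estimates follow from that single inequality (the second by letting $\psi$ tend to $\frac{1}{\mathcal L^N(U)}1_U$). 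You instead construct the mean-free representative $\tilde p$ by Riesz on mean-zero $L^2$ and conclude via the Liouville step $\nabla(p-\tilde p)=0$, hence $p-\tilde p$ constant. This yields a cleaner decomposition $p=\tilde p+c$, at the price of two ingredients the paper avoids: connectedness of $U$, and the independence of $L(\dive\phi)$ from the choice of preimage. For the latter, your appeal to the density of smooth compactly supported divergence-free fields in the divergence-free part of $H^1_0$ (true on Lipschitz domains, but itself a nontrivial theorem) is avoidable: $\phi-v_{\dive\phi}$ is a smooth compactly supported divergence-free field, on which $\pscal{\nabla p}{\cdot}=-\pscal{p}{\dive\,\cdot}$ vanishes tautologically. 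One shared caveat: \eqref{eq:lions} actually fails on disconnected $U$ (take $p$ locally constant with global mean zero), so the lemma must be read with $U$ connected, as you assume; your componentwise reduction recovers the first estimate but not \eqref{eq:lions} in that degenerate case.
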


\begin{proof}
Fix $\psi\in C^\infty_c(U)$ with $\int_U \psi\,dx=1$. Then, for every $\varphi\in C^\infty_c(U)$,  Bogovski's theorem  (see \cite[Theorem 1]{Bogovski79} and also \cite[Lemma III.3.1 and Theorem III.3.1]{Galdi-book}) implies the existence of $F_\varphi \in C^\infty_c(U;\R^N)$ such that $\dive F_\varphi=\varphi-\psi\int_U \varphi\,dx$ and
$$
\|F_\varphi\|_{H^1_0(U;\R^N)}\le  C_U\left\| \varphi-\psi \int_U \varphi\,dx\, \right\|_{L^2(U)} \le C_U(1+\| \psi\|_{L^2(U)} (\LN(U))^{1/2})  \| \varphi\|_{L^2(U)}.
$$
Then,
\begin{multline}
\label{eq:ineq-lions}
\pscal{p}{\varphi}=\pscal{p}{ \varphi-\psi \int_U \varphi\,dx}+\pscal{p}{\psi}\int_U \varphi\,dx\\
=\pscal{p}{\dive F_\varphi}+\pscal{p}{\psi}\int_U \varphi\,dx=-\pscal{\nabla p}{F_\varphi}+\pscal{p}{\psi}\int_U \varphi\,dx\\
\le \|\nabla p\|_{H^{-1}(U;\R^N)}\|F_\varphi\|_{H^1_0(U;\R^N)}+
|\pscal{p}{\psi}|\int_U |\varphi|\,dx\\
\le 
 [C_U(1+\| \psi\|_{L^2(U)} (\LN(U))^{1/2})\|\nabla p\|_{H^{-1}(U;\R^N)}+|\pscal{p}{\psi}| (\LN(U))^{1/2}] \|\varphi\|_{L^2(U)}.
\end{multline}
The previous inequality entails that $p$ can be extended to an element in the dual of $L^2(U)$. By Riesz theorem,   $p\in L^2(U)$.
\par
To recover the first estimate of the theorem, it is sufficient to note that $p\in L^2(U)$ yields in particular $p\in H^{-1}(U)$ so that we can estimate  the quantity $|\pscal{p}{\psi}|$ in \eqref{eq:ineq-lions} with $\|p\|_{H^{-1}(U)}\|\psi\|_{H^1_0(U)}$, hence  the result. 

\par
To recover \eqref{eq:lions}, we can assume that $p$ has zero mean and use again \eqref{eq:ineq-lions} 
to get
\begin{multline*}
\int_U p\varphi\,dx\le C_U \|\nabla p\|_{H^{-1}(U;\R^N)} \left\| \varphi-\psi\int_U \varphi\,dx\right\|_{L^2(U)}+\left|\int_U p\psi\,dx\right|
\int_U |\varphi|\,dx.
\end{multline*}
Letting $\psi$ converge to $\frac{1}{\mathcal L^N(U)}1_U$, 
$$
\int_U p\varphi\,dx\le C_U  \|\nabla p\|_{H^{-1}(U;\R^N)} \left\| \varphi-\fint_U \varphi\,dx\right\|_{L^2(U)}
$$
so that \eqref{eq:lions} follows by letting $\varphi$ converge to $p$ in $L^2(U)$.
\par
The result follows by choosing $C_U$ to be $2C_U$. 
\end{proof}

\begin{remark}
\label{rem:bound-uE}
Let $D'$ be a Lipschitz open set such that $E\subset\subset D'\subset\subset D$. 
There exists a constant $C_{D,D'}>0$  such that
\begin{equation}
\label{eq:bound}
\|u_{E,\Om,b}\|_{H^1(D)}\le C_{D,D'}\left( \|\psi\|_{H^1(D;\R^N)}+|\O|+|b|\right).
\end{equation}
Indeed consider a cut-off function $\phi\in C^\infty(\R^N)$ with $\phi\equiv 1$ in a neighborhood of $\R^N\setminus D$ and $\phi\equiv 0$ in a neighborhood of $\bar D'$ and the problem
 $$
 \begin{cases}
 \hdiv w=-\hdiv\left(\phi \psi+(1-\phi)(\Om x+b )\right)&\mbox{ in }D\setminus\bar D'\\[2mm]
 w=0\mbox{ on } \p D\cup\p D'.
 \end{cases}
$$
According to Bogovski's theorem  (see \cite[Theorem 1]{Bogovski79}), there is a solution $w\in H^1_0(D\setminus\bar D';\R^N)$ since
\begin{multline*}
\int_{D\setminus\bar D'}\hdiv\left(\phi \psi+(1-\phi)(\Om x+b )\right)\ dx\\[2mm]=\ipD \psi \cdot\nu\ d\h-\int_{\p D'}(\Om x+b )\cdot \nu\ d\h\\[2mm]=
\int_D\hdiv \psi\ dx-\int_{D'}\hdiv(\Om x+b)\ dx=0.
\end{multline*}
Moreover
\begin{multline*}
\|w\|_{H^1_0(D\setminus \overline D')}\le C_{D,D'}\|\hdiv\left(\phi\psi+(1-\phi)(\Om x+b )\right)\|_{L^2(D\setminus \overline D')}\\
\le  \tilde C_{D,D'}\left( \|\psi\|_{H^1(D;\R^N)}+|\O|+|b|\right).
\end{multline*}
We deduce that
$$
v:=\phi\psi+(1-\phi)(\Om x+b)+w \in \Hspace{E}{\O}{b}(D)
$$
so that the inequality
$$
\int_{D\setminus E}|\nabla u_{E,\Om.b}|^2\,dx\le \int_{D\setminus E}|\nabla v|^2\,dx
$$
entails \eqref{eq:bound}.
\hfill\P\end{remark}

\begin{remark}\label{rmk.erase}
   If $C$ is any  open component of $D\setminus  E$, except that  whose boundary contains $\partial D$, then the solution $(u_{E,\O,b}, p_{E,\O,b})$ to Stokes problem on $C$ is simply $(\O x+b,0)$. 
In other words, we can erase the ``bounded" connected components of   $D\setminus  E$  without changing the problem and could thus assume throughout the rest of the paper that $E$ and its complement are connected  (hence that $E$ is simply connected in dimension 2). We will refrain from doing so because it does not impact any of the subsequent arguments.\hfill\P\end{remark}

\begin{remark}
\label{rem:c2}
Note that, when ${\rm cap_2}(E)=0$,  $\Hspace{E}{\O}{b}(D)=H^1(D;\R^N)$. The integration in \eqref{eq.minsbc} is then  on the entirety of $D$, so that the $E$, $\Om$ and $b$  have no influence on the associated Stokes problem. \hfill\P
\end{remark}

 We claim that Galdi's theorem still applies to this generalized setting.  We need the following result.
 
 \begin{proposition}
\label{prop:galdi0}
Assume \eqref{eq.domain} and let $(\Om,b)\in \R^{N\times N}\times\R^N$ with $\O$ skew-symmetric. Assume that the normal stress $\sigma \nu\in H^{-1/2}(\p D;\R^N)$ on $\partial D$ associated to the solution  $(u,p)$ of \eqref{eq:Stokes-gen} with $\psi=0$ is such that
$$
\langle\sigma\nu,1\rangle_{\partial D}=0\qquad\text{and}\qquad
\langle[\sigma\nu, x], 1\rangle_{ \p D}=0.
$$
Then $(\O,b)=(0,0)$.
\end{proposition}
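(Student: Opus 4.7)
The plan is to mirror Step 1 of the proof of Theorem \ref{thm.G}, extracting the analogous rigidity through the hypothesis on $\p D$ alone. The crux will be the identity
\[
\int_{D\setminus E}\sigma:\nabla\phi\,dx \;=\; \langle \sigma\nu,\phi\rangle_{\p D},
\]
valid for any smooth divergence-free $\phi\in C^\infty(\bar D;\R^N)$ that coincides with $\O x+b$ on some Lipschitz open set $U$ with $E\subset\subset U\subset\subset D$. To establish it, note that on $U\setminus E$ we have $\nabla\phi=\O$ and hence $\sigma:\nabla\phi = 2\mu\, e(u):\O = 0$ (symmetric versus skew-symmetric), so the integral reduces to $\int_{D\setminus\bar U}\sigma:\nabla\phi\,dx$, to which the classical divergence theorem applies ($\sigma\in L^2(D\setminus\bar U)$ with $\dive\sigma=0$ there):
\[
\int_{D\setminus\bar U}\sigma:\nabla\phi\,dx \;=\; \langle \sigma\nu,\phi\rangle_{\p D} - \langle \sigma\nu_U, \O x+b\rangle_{\p U}.
\]
The last pairing vanishes: testing the same IBP with $V(x):=\O x+b$, smooth and divergence-free on $\bar D$, the bulk integral $\int_{D\setminus\bar U}\sigma:\O\,dx$ is zero and by hypothesis $\langle \sigma\nu,V\rangle_{\p D}=\langle \sigma\nu,\O x+b\rangle_{\p D}=0$, whence $\langle \sigma\nu_U,\O x+b\rangle_{\p U}=0$ (a value which is also $U$-independent, by a further annular IBP between two admissible choices of $U$).

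Pick next an approximating sequence $\phi_n\in C^\infty(\bar D;\R^N)$, divergence-free, with $\phi_n\equiv \O x+b$ on an open Lipschitz neighborhood of $E$ and $\phi_n\to u$ in $H^1(D;\R^N)$ (furnished by the very definition of $\Hspace{E}{\O}{b}(D)$). The identity yields
\[
\int_{D\setminus E}\sigma:\nabla\phi_n\,dx \;=\; \langle \sigma\nu,\phi_n\rangle_{\p D}.
\]
Because $\dive\phi_n=0$, the integrand rewrites as $\sigma:\nabla\phi_n = 2\mu\, e(u):e(\phi_n)$, thereby eliminating the pressure (and sidestepping the fact that $p$ is only $L^2_{\text{loc}}(D\setminus E)$). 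As $e(\phi_n)\to e(u)$ in $L^2(D;\R^{N\times N})$ and $e(u)\in L^2(D)$, Cauchy--Schwarz delivers convergence of the left-hand side to $2\mu\int_{D\setminus E}|e(u)|^2\,dx$; the trace convergence $\phi_n|_{\p D}\to u|_{\p D}=0$ in $H^{1/2}(\p D)$ forces the right-hand side to zero. Hence $e(u)=0$ in $D\setminus E$, so $u$ is an infinitesimal rigid motion on each connected component, and on the component $\hat D$ containing $\p D$ the Lipschitz trace condition $u=0$ on $\p D$ forces $u\equiv 0$ on $\hat D$.

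To deduce $(\O,b)=(0,0)$, I pass to the ${\rm cap}_2$-quasi-continuous representative $\tilde u$ of $u\in H^1(D;\R^N)$: by Remark \ref{rem.equiv-sp} $\tilde u = \O x+b$ ${\rm cap}_2$-q.e. on $E$, while $\tilde u=0$ ${\rm cap}_2$-q.e. on $\hat D$. Quasi-continuity across the common interface forces $\O x+b=0$ ${\rm cap}_2$-q.e. on $E\cap\p\hat D$. The fixed-point locus of a non-trivial infinitesimal rigid motion $x\mapsto \O x+b$ (with $\O$ skew-symmetric and $\O\ne 0$) has codimension at least two in $\R^N$---skew matrices have even rank---so it sits in an affine subspace of ${\rm cap}_2$-measure zero. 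Since ${\rm cap}_2(E)>0$ prevents the trace locus $E\cap \p\hat D$ from being confined to that thin set, we conclude $\O=0$, and then $b=0$ from evaluation at any single point.

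The main obstacle is the final capacity transfer: establishing that ${\rm cap}_2(E)>0$ does propagate to positive $2$-capacity of the interface locus $E\cap \p\hat D$. When $E$ has empty interior the claim is immediate since $\p E=E$; the general case is handled by passing to the hole-filled obstacle $E':=D\setminus \hat D\supset E$ and invoking that the boundary of a compact set of positive $2$-capacity cannot itself be ${\rm cap}_2$-null once its bounded complementary components have been absorbed. By contrast the IBP passage to the limit is robust, shielded by the twin cancellations $\sigma:\O=0$ (which confines the analysis to the Lipschitz annulus where $\sigma\in L^2$) and $\sigma:\nabla\phi = 2\mu\, e(u):e(\phi)$ for $\dive\phi=0$ (which removes the pressure from the convergence argument).
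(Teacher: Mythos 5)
Your first half --- the regularized integration by parts on the annulus $D\setminus \bar U$, the cancellation $\sigma:\O=0$, the use of the self-equilibration hypothesis to kill the inner boundary term, and the passage to the limit along an approximating sequence to conclude $e(u)=0$ in $D\setminus E$ --- is correct and in fact supplies details that the paper compresses into ``testing the Stokes equation with $u$ we obtain as before''. The gap is in the second half. You stop at $u\equiv 0$ on the component $\hat D$ of $D\setminus E$ containing $\p D$ and then try to propagate the value $0$ onto $E$ through the interface $E\cap\p\hat D$. Two steps there do not hold as written. First, ``quasi-continuity across the common interface'' is not a valid inference: a ${\rm cap}_2$-quasi-continuous representative does not inherit boundary values by topological approach from $\hat D$; what is true is the Lebesgue-point identity $\tilde u(x)=\lim_{r\to0}\fint_{B(x,r)}u\,dy$ for q.e.\ $x$, and to extract $\O x+b=0$ from $u=0$ on $\hat D$ you must restrict to points where $\hat D$ (or $D\setminus E$) has positive upper volume density --- exactly the set $\p^+E$ of Remark \ref{rem:geom} --- and then prove that \emph{this} set has positive capacity. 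Second, your capacity claim for the hole-filled boundary is asserted, not proved; it ultimately rests on a separation/dimension argument (a compact set with nonempty interior has a topological boundary of Hausdorff dimension at least $N-1$, hence positive ${\rm cap}_2$; if the interior is empty one falls back on $\p E'=E'$), which is precisely the delicate point and cannot be waved through.

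The paper avoids the interface entirely by one further observation that you missed: since $u\in \Hspace{E}{\O}{b}(D)$, the approximants $\phi_n$ satisfy $e(\phi_n)\equiv 0$ on a neighborhood of $E$, and $e(\phi_n)\to e(u)$ in $L^2(D)$, so $e(u)=0$ a.e.\ on $E$ as well. Combined with $e(u)=0$ a.e.\ on $D\setminus E$, this gives $e(u)=0$ a.e.\ on the whole connected set $D$, so $u$ is a single global infinitesimal rigid motion, and $u=0$ on $\p D$ forces $u\equiv 0$ on all of $D$. Then \eqref{eq:precise} gives $\O x+b=0$ for ${\rm cap}_2$-q.e.\ $x\in E$ directly, and your (correct) final observation --- that the zero set of a nontrivial map $x\mapsto \O x+b$ with $\O$ skew lies in an $(N-2)$-dimensional affine subspace, hence is ${\rm cap}_2$-null --- finishes the proof against \eqref{eq.domain}. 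Your route can be repaired along the lines of Remark \ref{rem:geom}, but as submitted the transfer to $E$ is not established.
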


\begin{proof}
Testing the Stokes equation with $u$ we obtain as before that $e(u)\equiv 0$ in $D\setminus E$. Since $u\in \Hspace{E}{\O}{b}(D)$,  we also obtain by approximation that, as a distribution, $e(u)$ is supported in $D\setminus E$. As a consequence  $e(u)=0$ on the whole of  $D$, which together with $u=0$ on $\partial D$ entails that $u=0$ on $D$. 
\par
 By Remark \ref{rem.equiv-sp}, if $u\in \Hspace{E}{\O}{b}(D)$ and $\tilde u$ denotes its precise representative, then
\begin{equation}
\label{eq:precise}
\tilde u(x)=\O x+b\qquad\text{ for ${\rm cap}_2$-a.e. $x\in E$}.
\end{equation}
Then, since
$$
0=\tilde u(x)=\O x+b \qquad\text{for ${\rm cap}_2$-q.e. $x\in E$},
$$ 
and because the kernel of a non-zero skew-symmetric matrix is at most $(N-2)$-dimensional, we deduce that, unless $\O=0$ and $b=0$, $E$ must be of the form 
$$
E=E_0\cup E_1
$$
with ${\rm cap}_2(E_0)=0$ and $E_1$  contained in a $(N-2)$-dimensional hyperplane so that $\mathcal H^{N-2}(E_1)<\infty$. If $N\ge 3$, by e.g. \cite[Theorem 3, Section 4.7.2]{EG}, ${\rm cap}_2(E_1)=0$, contradicting \eqref{eq.domain}. If $N=2$, then $E_1$, having finite $\mathcal H^0$-measure, is a  finite number of points, hence, once again, ${\rm cap}_2(E_1)=0$, contradicting \eqref{eq.domain}.  We conclude that $\O=0$ and $b=0$.  The result  is proved. 
\end{proof}
\EEE

\begin{remark}
\label{rem:geom}
The proof of Proposition \ref{prop:galdi0} above reduces to establishing that relations $e(u)=0$ in $D\setminus E$ and $u=0$ on $\partial D$ for $u\in \Hspace{E}{\O}{b}$ yield $\O=b=0$.
\par
In the case where  $E$ is regular  enough (see Step 1 in the proof of Theorem \ref{thm.G}) we used the information of the trace of $u$ on $\partial E$ -- a well defined quantity equal to $\O x+b$ -- as a  transmission condition which forces $\O$ and $b$ to vanish. This argument  has apparently disappeared  in the proof Proposition \ref{prop:galdi0}.
Through measure-theoretical geometric arguments, we can also  evidence in our general setting a transmission condition between the irregular boundary $\partial E$ and the region $D\setminus E$. 
\par
More precisely, setting
$$
\p^+ E:=\left\{x\in \p E: \limsup_{r \to 0^+} \frac{\LN(B(x,r)\cap (D\setminus E))}{r^N}>0\right\}
$$ 
we claim that
\beq\label{eq.capd+E}
{\rm cap}_2(\p^+ E)>0.
\eeq
Now, if $u\in \Hspace{E}{\O}{b}(D)$ with $u=0$ on $D\setminus E$, by \eqref{eq:precise} and the continuity of $y\to \O y+b$, for ${\rm cap}_2$-a.e. $x\in \p^+ E$, 
\begin{multline*}
0=\lim_{r\to 0^+}\frac{1}{r^N}\int_{B(x,r)}|u(y)-(\O y+b)| dy\ge \limsup_{r\to 0^+}\frac{1}{r^N}\int_{B(x,r)\cap (D\setminus E)}|\O y+b| dy\\
\ge |\O x+b|\limsup_{r\to 0^+}\frac1{r^N}\LN(B(x,r)\cap (D\setminus E))
\end{multline*}
which yields
$$
\O x+b\equiv 0\mbox{ on }\p^+ E.
$$
But in view of \eqref{eq.capd+E}, we get $\O=0$ and $b=0$ by using the same arguments as in the final part of the proof of Proposition \ref{prop:galdi0} with $\p^+E$ in lieu of $E$.
\par
Let us prove claim \eqref{eq.capd+E}. If false, by e.g. \cite[Theorem 4, Section 4.7.2]{EG}, 
$$\h(\p^+ E)=0.$$ Since the essential boundary $\p_{\rm ess} E$ of $E$ is contained in $\p^+E$,  we deduce that  $\h(\p_{\rm ess} E)=0$. By Federer's theorem \cite[Theorem 1, Section 5.11]{EG}, $E$ is a set of finite perimeter with $1_E=0$, hence a set of $0$-volume, that is such that $\p E=E$. We infer that all points in $E$ have density $1$  for $D\setminus E$. As a consequence $\p^+E=\p E$, so that  ${\rm cap}_2(E)={\rm cap}_2(\p E)=0$ against assumption \ref{eq.domain}.
\hfill\P\end{remark}

We are now in a position to generalize Galdi's result to our setting.

\begin{theorem}[\bf A generalized Galdi theorem] 
\label{thm.Gg} 
Assume \eqref{eq.domain}.Then
there exists a  unique  pair $(\O_E, b_E)\in \R^{N\times N}\times\R^N$ with $\O_E$ skew symmetric
such that, denoting by
$$
(u_E,p_E):=(u_{E, \O_E, b_E}, p_{E, \O_E, b_E})
$$ 
the solution to  \eqref{eq:Stokes-gen}, where  $\psi\in H^1(D;\R^N)$ with ${\rm div\;} \psi=0$, 
the associated stress $\sigma_{E}:= 2 \mu e(u_{E})-p_{E}\;\! \id$
satisfies 
 $$
\langle\sigma_{E}\nu,1\rangle_{ \p D}=0\qquad\text{and}\qquad
\langle[\sigma_{E}\nu, x], 1\rangle_{ \p D}=0,
$$
where $\nu(x)$ is the outer normal to $D$ at $x\in\p D$. 
\end{theorem}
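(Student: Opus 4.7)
The plan is to transplant the three-step scheme from the proof of Theorem~\ref{thm.G} into the generalized setting. Self-equilibration will be imposed via integrals on $\p D$ rather than $\p E$ (as in Remark~\ref{rk.switch}): the stress $\sigma_{E,\Omega,b}= 2\mu\, e(u_{E,\Omega,b}) - p_{E,\Omega,b}\id$ is in $L^2$ in a neighborhood of $\p D$ by Lemma~\ref{lem:lions} and \eqref{eq:est-p}, and is divergence-free on $D\setminus E$ by the Stokes equation; hence $\sigma_{E,\Omega,b}\nu$ is a well-defined element of $H^{-1/2}(\p D;\R^N)$. The Step~1 uniqueness argument that in the Lipschitz case relied on the trace of $u$ on $\p E$ will be replaced by Proposition~\ref{prop:galdi0}.

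Concretely, I would introduce the linear map
$$
L : (\Omega,b) \longmapsto \bigl(\langle \sigma_{E,\Omega,b}\nu, 1\rangle_{\p D},\ \langle [\sigma_{E,\Omega,b}\nu, x], 1\rangle_{\p D}\bigr),
$$
defined on skew-symmetric $\Omega\in \R^{N\times N}$ paired with $b\in \R^N$, where $(u_{E,\Omega,b}, p_{E,\Omega,b})$ is the solution to \eqref{eq:Stokes-gen} with $\psi=0$. Its linearity in $(\Omega, b)$ follows from the linearity of the variational problem \eqref{eq.minsbc} and the (properly normalized) pressure construction recalled after Lemma~\ref{lem:lions}. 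The domain and the target of $L$ both have dimension $N(N+1)/2$, and by Proposition~\ref{prop:galdi0} we have $\ker L = \{0\}$, so $L$ is a linear isomorphism.

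Next, I would solve \eqref{eq:Stokes-gen} with the prescribed datum $\psi$ on $\p D$ and $(\Omega, b) = (0, 0)$ on $E$, obtaining a solution $(u_0, p_0)$ with stress $\sigma_0$. Setting $F := (\langle \sigma_0 \nu, 1\rangle_{\p D},\ \langle [\sigma_0 \nu, x], 1\rangle_{\p D})$ and $(\Omega_E, b_E) := L^{-1}(-F)$, the pair
$$
(u_E, p_E) := (u_0 + u_{E,\Omega_E,b_E},\ p_0 + p_{E,\Omega_E,b_E})
$$
solves \eqref{eq:Stokes-gen} with $\psi$ on $\p D$ and $\Omega_E x + b_E$ on $E$, and by construction its stress annihilates both self-equilibration functionals on $\p D$. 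Uniqueness of $(\Omega_E, b_E)$ follows from injectivity of $L$ together with the uniqueness of the Stokes solution for prescribed boundary data on $\p D$ and $E$.

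The substantive difficulty is concentrated entirely in the uniqueness step, i.e.\ in Proposition~\ref{prop:galdi0}: once it is in hand, the argument above is purely algebraic, exploiting only the fact that a linear injection between finite-dimensional spaces of equal dimension is a bijection. The passage from the Lipschitz regime to the capacity-theoretic regime thus reduces to verifying that any $u\in \Hspace{E}{\O}{b}(D)$ with $e(u) = 0$ in $D\setminus E$ and $u = 0$ on $\p D$ forces $\Omega = 0$ and $b = 0$; this in turn hinges on the assumption $\mathrm{cap}_2(E) > 0$ and on the precise-representative identification $\tilde u = \Omega x + b$ $\mathrm{cap}_2$-q.e.\ on $E$ recalled in Remark~\ref{rem.equiv-sp}, together with the fact that a subset of an $(N-2)$-dimensional affine subspace has zero $2$-capacity.
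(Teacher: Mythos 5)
Your proposal is correct and follows essentially the same route as the paper: the paper's proof of Theorem~\ref{thm.Gg} simply reruns the three-step argument of Theorem~\ref{thm.G} with Step~1 replaced by Proposition~\ref{prop:galdi0}, which is exactly what you do, merely repackaging Steps~2 and~3 as the statement that the linear map $L$ is an injection, hence a bijection, between spaces of dimension $N(N+1)/2$. Your identification of where the real difficulty lies (the capacity-theoretic uniqueness step) also matches the paper's emphasis.
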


\begin{proof}
The proof is identical to that of Theorem \ref{thm.G} once Step 1 has been replaced by Proposition \ref{prop:galdi0}.
\end{proof}

Note that when $E$ has the regularity required for the application of Theorem \ref{thm.G}, we recover the same solution.

\section{Optimization problems in two dimensions for Stokes flows}
\label{sec.min}
  
We saw in Section \ref{sec.galdi} (Theorem \ref{thm.Gg}) above that  assuming the minimal  condition \eqref{eq.domain} was sufficient to assert the existence of a self-equilibrated velocity field of the form $\O_E x+b_E$ for the obstacle $E$ when the surrounding Stokes flow is  driven by $\psi\in H^1(D;\R^N)$ with ${\rm div\;} \psi=0$ on $\p D$.
\par
We  now address the optimization with respect to the obstacle $E$ \EEE of a functional of the type $E\mapsto f(\O_E ,b_E)$.
 \vskip10pt
Shape optimization problems associated to elliptic equations under Dirichlet boundary conditions are generically ill-posed. This leads to  homogenization effects for example. 
Well-posedness is connected to the issue of {\it stability of solutions} to elliptic problems under the variation of the domain. Such stability statements  hold true  under additional constraints which severely limit the possible geometries: one such setting is  the classical constraint of  equi-Lipschitzianity (see \cite{Ch75} and \cite[Section 4.6]{BB}).
\par
The situation is different in two spatial dimensions thanks to  \cite{Sverak93}: through the use of capacitary arguments, stability is proved there within the class of domains whose complement has a  uniformly bounded number of connected components, thereby allowing a large class of possibly singular variations.   Well-posedness of shape optimization problems, like that of the  minimal drag force in a stationary Stokes/Navier-Stokes setting, is then achieved (see \cite[Section 4]{Sverak93}). Suitable extensions involving  domains  with capacitary constraints have been proposed in \cite{BZ1,BZ2}, and used in \cite{BG} for various shape optimization problems in fluids.
 \vskip10pt
In this section and with \cite{Sverak93} in mind we restrict our focus to the two-dimensional setting. The pair $(\O,b)$ with  the skew symmetric matrix 
 $
 \O=\begin{psmallmatrix}0&-\o\\[2mm]\o&0\end{psmallmatrix}
 $ 
 may be replaced by a pair  $(\o, b) \in \R\times \R^2$  with $\o\vec{e}_3$ as out of plane angular velocity. We will correspondingly define $\o_E$ in lieu of $\O_E$ and use interchangeably $\O$ or $\o$,depending on the context.
 \par

We  first prove (Theorem \ref{thm:stability}) the stability of self-equilibrated flows with respect to the variation of the obstacle $E$ under  Hausdorff convergence (see  \cite[Definition 4.4.9]{AT}).

We then consider  (Theorem \ref{thm:shape})  the optimization (minimization or maximization) with respect to $E$ of the functional
$$
E\mapsto f(\o_E,b_E)
$$
under suitable conditions on the function $f:\R\times \R^2\to \overline\R$ and additional constraints for $E$. This setting will include the minimization or maximization of the modulus of the angular velocity of the obstacle.

\subsection{Stability of self-equilibrated solutions under obstacle perturbation} 
\label{subsec:stability}
 The following theorem holds true.
 
 \begin{theorem}[\bf Stability]
\label{thm:stability}
Let $D\subseteq \R^2$ be open bounded with Lipschitz boundary, and let  $\psi\in H^1(D;\R^2)$ with ${\rm div\;} \psi=0$  be given. Let $E_n$ and $E$ be compact sets in $D$ satisfying \eqref{eq.domain} with a number of connected components uniformly bounded in $n$ and such that
\begin{equation}
\label{eq:Hauss}
E_n\to E\qquad\text{in the Hausdorff topology}.
\end{equation}
Then the self-equilibrated solutions $u_{E_n}, u_E$ given by Theorem \ref{thm.Gg} satisfy
$$
u_{E_n} \weak u_{E}\qquad\text{weakly in }H^1(D;\R^2)
$$
and
$$
\o_{E_n} \to \o_E\qquad\text{and} \qquad b_{E_n} \to b_E.
$$
\end{theorem}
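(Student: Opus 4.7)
The structure is compactness plus uniqueness: produce uniform bounds on $(\omega_{E_n}, b_{E_n})$, extract weak limits, and identify them via the capacitary stability result of \v{S}ver\'ak \cite{Sverak93}; uniqueness in Theorem \ref{thm.Gg} then upgrades subsequential to full convergence. The stability step is the place where the two-dimensional hypothesis and the uniform bound on the number of connected components of $E_n$ genuinely enter, and it is the main obstacle.

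\textbf{Step 1: a priori bound on $(\omega_{E_n}, b_{E_n})$.} I argue by contradiction: suppose $L_n := |\omega_{E_n}| + |b_{E_n}|\to +\infty$. The rescaling $\tilde u_n := u_{E_n}/L_n$, $\tilde\omega_n:=\omega_{E_n}/L_n$, $\tilde b_n:=b_{E_n}/L_n$, $\tilde\psi_n:=\psi/L_n$ produces a self-equilibrated Stokes solution on $D\setminus E_n$ with data $(\tilde\omega_n,\tilde b_n,\tilde\psi_n)$ which is uniformly bounded in $H^1(D;\R^2)$ by Remark \ref{rem:bound-uE}, since $|\tilde\omega_n|+|\tilde b_n|=1$ and $\tilde\psi_n\to 0$ in $H^1$. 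Extracting a subsequence, $\tilde u_n\weak \tilde u_*$ in $H^1(D;\R^2)$ and $(\tilde\omega_n,\tilde b_n)\to (\tilde\omega_*,\tilde b_*)$ with $|\tilde\omega_*|+|\tilde b_*|=1$. The identification in Step 2 (applied with $\psi\equiv 0$) shows that $\tilde u_*$ is the self-equilibrated solution on $D\setminus E$ with vanishing boundary datum; Proposition \ref{prop:galdi0} then forces $(\tilde\omega_*,\tilde b_*)=(0,0)$, contradicting $|\tilde\omega_*|+|\tilde b_*|=1$.

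\textbf{Step 2: identification of the limit.} Along a subsequence, $(\omega_{E_n},b_{E_n})\to (\omega_*,b_*)$ and $u_{E_n}\weak u_*$ in $H^1(D;\R^2)$. I claim $u_*=u_{E,\omega_*,b_*}$. \v{S}ver\'ak's stability theorem \cite{Sverak93}, which requires both the two-dimensional setting and the uniform bound on the number of connected components of $E_n$, asserts that if $w_n\weak w$ in $H^1(D;\R^2)$ with $w_n=0$ ${\rm cap}_2$-quasi-everywhere on $E_n$, then $w=0$ ${\rm cap}_2$-quasi-everywhere on $E$. Applied to $w_n := u_{E_n}-(\omega_{E_n} x+b_{E_n})$ and combined with Remark \ref{rem.equiv-sp}, this yields $u_*\in \Hspace{E}{\omega_*}{b_*}(D)$. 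To identify $u_*$ with the minimizer in \eqref{eq.minsbc}, I construct a recovery sequence: given $v\in \Hspace{E}{\omega_*}{b_*}(D)$ with $v=\psi$ on $\partial D$, density reduces to smooth $v$ with $v\equiv \omega_* x+b_*$ on some open neighborhood $U$ of $E$ (hence on $E_n$ for $n$ large by \eqref{eq:Hauss}); setting $v_n := v + [(\omega_n-\omega_*)x+(b_n-b_*)]\eta + w_n$, where $\eta$ is a smooth cutoff equal to $1$ near $E$ and supported in $U$ and $w_n$ is a Bogovski corrector in a Lipschitz annulus containing the support of $\nabla\eta$ that restores incompressibility, one gets $v_n\in \Hspace{E_n}{\omega_n}{b_n}(D)$ with $v_n=\psi$ on $\partial D$ and $v_n\to v$ strongly in $H^1$. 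Weak lower semicontinuity of $\int|\nabla u|^2$ combined with this recovery sequence identifies $u_*$ with $u_{E,\omega_*,b_*}$.

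\textbf{Step 3: passage to the limit in the self-equilibrium and conclusion.} By Remark \ref{rk.switch}, self-equilibrium reads $\langle\sigma_{E_n}\nu,1\rangle_{\partial D}=0$ and $\langle[\sigma_{E_n}\nu,x],1\rangle_{\partial D}=0$. Choose a Lipschitz $D'$ with $E\cup\bigcup_{n\text{ large}} E_n\subset\subset D'\subset\subset D$, available thanks to \eqref{eq:Hauss}. With the pressure normalization from \eqref{eq:est-p}, the $p_{E_n}$ are uniformly bounded in $L^2(D\setminus\overline{D'})$; combined with $\nabla u_{E_n}\weak \nabla u_*$ in $L^2$, this gives $\sigma_{E_n}\weak \sigma_*$ in $L^2(D\setminus\overline{D'};\R^{2\times 2})$. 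Since $\sigma_{E_n}$ is divergence-free there, the normal traces $\sigma_{E_n}\nu$ converge weakly in $H^{-1/2}(\partial D;\R^2)$ to $\sigma_*\nu$, and one passes to the limit in both self-equilibrium conditions. Uniqueness in Theorem \ref{thm.Gg} then gives $(\omega_*, b_*)=(\omega_E, b_E)$ and $u_*=u_E$. Since the limit does not depend on the extracted subsequence, the entire sequence converges, which concludes the proof.
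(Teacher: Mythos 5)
Your proposal is correct and follows essentially the same route as the paper: the a priori bound on $(\o_{E_n},b_{E_n})$ by rescaling and contradiction with Proposition \ref{prop:galdi0}, \v{S}ver\'ak's two--dimensional capacitary stability to identify the subsequential limit, pressure estimates to pass to the limit in the self-equilibration conditions on $\p D$, and uniqueness in Theorem \ref{thm.Gg} to upgrade to full convergence. The only (inessential) difference is in the packaging of \cite{Sverak93}: the paper passes to the limit in $u_n-\tilde\psi_n={\mathcal P}_{D\setminus E_n}(u_n-\tilde\psi_n)$ using the operator-norm convergence of the projections onto the divergence-free spaces $H(D\setminus E_n)$, whereas you invoke the equivalent Mosco-type ``weak limits of functions vanishing ${\rm cap}_2$-q.e.\ on $E_n$ vanish q.e.\ on $E$'' statement together with an explicit Bogovski recovery sequence -- the latter being essentially the paper's construction of $\tilde\psi_n$, so nothing substantive is gained or lost.
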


\begin{proof}
For notational simplicity, we set $\o_{E_n}:=\o_n, b_{E_n}:=b_n$ and $u_{E_n} :=u_n$.
We divide the proof into three steps.

\vskip10pt\noindent{\bf Step 1.}
Assume that
$$
\o_n\to \om\qquad\text{and}\qquad b_n\to b
$$
for some $\o\in \R$ and $b\in\R^2$.  Thanks to \eqref{eq:bound} $(u_n)_{n\in\N}$ is bounded in $H^1(D)$, so that, up to a subsequence
\begin{equation}
\label{eq:unweak}
u_n\weak u\qquad\text{weakly in }H^1(D;\R^2).
\end{equation}
Let $\varphi\in C^\infty_c(D\setminus E;\R^2)$ with ${\rm div}\varphi=0$: in view of the Hausdorff convergence \eqref{eq:Hauss}, $\varphi$ is also in $C^\infty_c(D\setminus E_n;\R^2)$ for $n$ large, so that passing to the limit in the Stokes equations on $D\setminus E_n$ we get, in view of \eqref{eq:unweak},
\begin{equation}
\label{eq:weakeq}
\int_{D\setminus E}\nabla u \cdot \nabla \varphi\,dx=0.
\end{equation}
We claim that
\begin{equation}
\label{eq:claim-u}
u\in \Hspace{E}{\O}{b}(D)
\end{equation}
(see \eqref{eq:Hob}).

Indeed, for every open set $A\subseteq D$,  set
$$
H(A):=Closure_{H^1}\{\varphi \in C^\infty_c(A;\R^2)\,:\, \hdiv\varphi=0\},
$$
and let ${\mathcal P}_A$ denote the orthogonal projection of $H(D)$ onto $H(A)$ with respect to the scalar product 
$(\nabla \cdot,\nabla \cdot)_{L^2}$.
\par
Since the number of connected components of $E_n$ is uniformly bounded in $n$,  \cite[Theorem 4.1]{Sverak93} implies that
\begin{equation}
\label{eq:proj}
{\mathcal P}_{D\setminus E_n}\to {\mathcal P}_{D\setminus E}\qquad\text{strongly in }{\mathcal L}(H(D);H(D)).
\end{equation}
Let $D'$ be a Lipschitz domain such that $E\subset\subset D'\subset\subset D$, so that, for $n$ large enough,  $E_n\subset\subset D'\subset\subset D$. Thanks to Bogovski's theorem (see \cite[Theorem 1]{Bogovski79}), we can construct
$\tilde \psi_n, \tilde\psi\in H^1(D;\R^2)$ such that 
$$\begin{cases}
{\rm div\,}\tilde\psi_n={\rm div\,}\tilde\psi=0 \quad\text{in } D,& \tilde \psi_n=\tilde \psi=\psi\qquad\text{on }\p D\\[3mm]
\tilde \psi_n=\o_n \vec e_3\wedge x+b_n\quad\text{in }D',& 
\tilde \psi=\o \vec e_3\wedge x+b\quad\text{in }D'\end{cases}
$$
and
\begin{equation}
\label{eq:psin}
\tilde \psi_n\to \tilde \psi\quad\text{strongly in }H^1(D;\R^2).
\end{equation}
Clearly, by construction
$$
u_n-\tilde \psi_n\in H(D\setminus E_n)
$$
so that passing to the limit in
$$
u_n-\tilde \psi_n={\mathcal P}_{D\setminus E_n}(u_n-\tilde \psi_n),
$$
in view of \eqref{eq:proj}, \eqref{eq:unweak} and \eqref{eq:psin} we deduce that
$$
u-\tilde \psi={\mathcal P}_{D\setminus E}(u-\tilde \psi)
$$
 which yields \eqref{eq:claim-u}.
\par
From \eqref{eq:weakeq} and \eqref{eq:claim-u} we infer that $u=u_{E,\o,b}$. In view of  \eqref{eq:est-p} we can assume that
the pressure $p_n$ associated to $u_n$ is bounded in $L^2(D\setminus \bar D')$, with
$$
p_n\weak p\qquad \text{weakly in }L^2(D\setminus \bar D')
$$
where $p\in L^2_{loc}(D\setminus E)$ is the pressure associated to $u_{E,\o,b}$.
Passing to the associated stresses, this entails
$$
\sigma_{n}\nu \weakst\sigma \nu\qquad\text{weakly* in }H^{-1/2}(\partial D,\R^2),
$$ 
and consequently, because the $u_n$'s are self-equilibrated we obtain
$$
\langle\sigma \nu,1\rangle_{\p D}=\langle[\sigma\nu, x], 1\rangle_{D}=0.
$$
We conclude that $u=u_E$, so that, in particular,  the entire sequence  is such that 
\begin{equation}
\label{eq:unuEbis}
u_n\weak u_E\qquad\text{weakly in }H^1(D;\R^2)
\end{equation}
while
\begin{equation}
\label{eq:onbnE}
\o=\o_E\qquad\text{and}\qquad b=b_E.
\end{equation}

\vskip10pt\noindent{\bf Step 2.}
Let us show that
$$
|\o_n|+|b_n|\le C
$$
for some $C>0$. By contradiction, assume that $\alpha_n:=|\o_n|+|b_n|\to +\infty$. Up to a subsequence we can assume that
$$
\frac{1}{\alpha_n}\o_n\to \o\qquad\text{and}\qquad \frac{1}{\alpha_n}b_n\to b
$$
for some $\o\in \R$ and $b\in\R^2$ such that $(\o,b)\not=(0,0)$. Notice that $(\frac{1}{\alpha_n}u_n,\frac{1}{\alpha_n}p_n)$ is the self-equilibrated solution of the Stokes problem for $E_n$ under the boundary displacement 
$$
\frac{1}{\alpha_n}\psi\to 0
$$
Following the arguments of Step 1 -- the only difference being that the boundary velocity is not fixed but converges to $0$ strongly --  yields 
$$
\frac{1}{\alpha_n}u_n\weak u^0_E\qquad\text{weakly in }H^1(D;\R^2)
$$
where $u^0_E$ is the self-equilibrated solution for $E$ relative to the zero boundary condition on $\partial D$, and with associated non trivial rigid body motion on $E$ given by $\o \vec e_3\wedge x+b$. But this is against Theorem \ref{thm.Gg}, according to which $(\o,b)=(0,0)$.

\vskip10pt\noindent{\bf Step 3: Conclusion.} By Step 2, $(\o_n,b_n)_{n\in\N}$ is bounded in $\R\times \R^2$, so that, up to a subsequence, $(\om_n,b_n)\to (\om,b)$. Relations \eqref{eq:unuEbis} and \eqref{eq:onbnE} of Step 1 yield the conclusion.
\end{proof}

\subsection{Shape optimization}
\label{subsec:shape}
We are now in a position to derive our  shape optimization result. 
\par
Let $D\subset \R^2$ be an open bounded set with Lipschitz boundary, and  fix $D'\subset\subset D$ open. 
Given $0<m<{\mathcal L}^2(D')$,  consider the family of obstacles
\beq\label{eq.fo}
{\mathcal E}_m:=\{E\subset  \overline{D'}\,:\, \text{$E$ is compact connected with }{\mathcal L}^2(E)\ge m\}.
\eeq
Let $f:\R\times \R^2\to \bar \R$. We are interested in minimizing or maximizing
the map
$$
E\in {\mathcal E_m} \mapsto f(\o_E,b_E),
$$
where $(\o_E,b_E)$ is the velocity field on $E$ of the self-equilibrated Stokes flow under the boundary condition
$\psi\in H^1(D;\R^2)$ with $\hdiv \psi=0$ according to Theorem \ref{thm.Gg}.

The following result holds

\begin{theorem}
\label{thm:shape}
If $f:\R\times \R^2\to \bar \R$ is lower semicontinuous, then the minimum problem 
$$
\min_{E\in {\mathcal E}_m} f(\o_E,b_E)
$$
is well posed. If $f$ is upper semicontinuous, then the maximum problem
$$
\max_{E\in {\mathcal E}_m} f(\o_E,b_E)
$$
is well posed.
\end{theorem}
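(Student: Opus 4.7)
The plan is to run the direct method of the calculus of variations. Consider for definiteness the minimization problem (the maximum case is identical after flipping signs). Let $(E_n)\subset \mathcal E_m$ be a minimizing sequence, so that $f(\o_{E_n},b_{E_n})\to \inf_{E\in\mathcal E_m} f(\o_E,b_E)$. Since every $E_n$ is a compact subset of the fixed compact set $\overline{D'}$, Blaschke's selection theorem furnishes a subsequence (not relabeled) and a compact $E\subseteq\overline{D'}$ such that $E_n\to E$ in the Hausdorff topology.

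The next step is to check that $E\in\mathcal E_m$. Connectedness of $E$ follows from the classical fact that a Hausdorff limit of connected compact subsets of a compact ambient space is connected. For the volume constraint, I use the upper semicontinuity of $\mathcal L^2$ under Hausdorff convergence of compact sets: for every $\varepsilon>0$ one has $E_n\subseteq N_\varepsilon(E)$ for $n$ large, hence
$$
\limsup_{n}\mathcal L^2(E_n)\le \mathcal L^2(N_\varepsilon(E)),
$$
and letting $\varepsilon\downarrow 0$ using compactness of $E$ yields $\mathcal L^2(E)\ge \limsup_n\mathcal L^2(E_n)\ge m$. In particular $\mathcal L^2(E)>0$, so ${\rm cap}_2(E)>0$ and $E$ satisfies \eqref{eq.domain}. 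Therefore $E\in\mathcal E_m$, and the self-equilibrated pair $(\o_E,b_E)$ of Theorem \ref{thm.Gg} is well-defined.

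Since each $E_n$ and the limit $E$ are connected, the number of connected components of $E_n$ is uniformly bounded (by $1$), so the hypotheses of the stability result Theorem \ref{thm:stability} are met. I obtain
$$
\o_{E_n}\to \o_E \qquad\text{and}\qquad b_{E_n}\to b_E.
$$
Lower semicontinuity of $f$ then gives
$$
f(\o_E,b_E)\le \liminf_{n\to\infty} f(\o_{E_n},b_{E_n})=\inf_{F\in\mathcal E_m} f(\o_F,b_F),
$$
so $E$ is a minimizer. The maximization case follows verbatim with $\liminf$ replaced by $\limsup$ and lower by upper semicontinuity of $f$.

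Essentially no genuine obstacle remains at this stage: the argument bundles together three standard ingredients, namely Blaschke compactness in the Hausdorff topology, the preservation of connectedness and the upper semicontinuity of $\mathcal L^2$ under Hausdorff convergence, and the already-established stability Theorem \ref{thm:stability}. All of the substantive work sits in Theorem \ref{thm:stability}, whose proof rests on Šverák's two-dimensional capacity result and on the uniform bound on the number of connected components --- which is exactly why the admissible class $\mathcal E_m$ is restricted to connected obstacles.
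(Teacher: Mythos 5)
Your proposal is correct and follows essentially the same route as the paper: Blaschke compactness, preservation of connectedness and of the volume constraint under Hausdorff convergence (you use $\varepsilon$-neighborhoods of $E$ where the paper uses lower semicontinuity of $\mathcal L^2$ on the open complement --- two equivalent phrasings), and then the stability Theorem \ref{thm:stability} combined with the semicontinuity of $f$.
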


\begin{proof}
We investigate  the minimization problem, the other one  being similar.
Consider a minimizing sequence $E_n\in {\mathcal E}_m$.
\par
Appealing to Blaschke's compactness properties of Hausdorff convergence (see \cite[Theorem 4.4.15]{AT} or  \cite[Theorem 3.16]{Falconer86}) we can always assume that 
$$E_n \mbox{ converges to  a compact  set $E\subset \bar D'$ for the Hausdorff convergence.}
$$
Further, since $E_n$ is a sequence of connected sets, $E$ is also connected.
\par
Moreover, since, for every compact set $K\subset D\setminus E$, $K\subset D\setminus E_{n}$ for $n$ large enough, we easily deduce
$$
{\mathcal L}^2(D\setminus E)\le \liminf_{n\to +\infty}{\mathcal L}^2(D\setminus E_{n})
$$
so that 
$$
m\le \limsup_{n\to +\infty}{\mathcal L}^2(E_{n})\le {\mathcal L}^2(E).
$$

We conclude that $E\in {\mathcal E}_m$ and that $ {\mathcal E}_m$ is compact in the Hausdorff topology. In particular,  ${\rm cap_2}(E)>0$ .

\par
In view of the stability result Theorem \ref{thm:stability}, $\o_{E_n} \to \o_E$ and $b_{E_n}\to b_E$. Then the lower semicontinuity of $f$  implies
$$
f(\o_E,b_E) \le \liminf_{n\to +\infty}f(\o_{E_n},b_{E_n}) 
$$
so that $E$ is a minimizer for the problem.
\EEE
\end{proof}

\begin{remark}
\label{rem:constraint}
In the spirit of the proof of Theorem \ref{thm:shape},  the shape optimization is also  well posed  in 
\beq\label{eq.domain-bis}
{\mathcal F}:=\{E\subset  \overline{D'}\,:\, \text{$E$ is compact connected with $\ell_E\subseteq E$ }\},
\eeq
where $\ell_E$ is a segment of given length $l>0$. 
\par
Indeed, if $E_n\in  {\mathcal F}$ with $E_n$  converging  to $E$ in the Hausdorff topology, then, as above $E\subseteq \overline{D'}$ and $E$ is connected. Up to a further subsequence, we can further assume that the segments $\ell_{E_n}$ converge in the Hausdorff topology to a segment $\ell_E$ of length $l$ such that $\ell_E\subseteq E$, which shows that $E\in {\mathcal F}$. The rest of the proof is identical.
\hfill\P\end{remark}

\section{A generalized  setting -- The Navier-Stokes case} \label{sec.galdiNS}

In this section where the dimension is kept to $N=2,3$ for simplicity of exposition, we propose to  tackle the quasi-static problem in the generalized setting of Subsection \ref{sub.Gen}.
To that effect we must find in this new framework an equivalent to Remark \ref{rk.switch} which will allow us to transfer 
the conditions \eqref{eq.qs-solid} to the smooth boundary $\p D$ of the fluid domain.
\par
 We only consider  the case $N=3$ and emphasize the modifications required in  the case $N=2$ as needed. 
To such an end, we introduce the rank-one tensor $\rho_E(\o\wedge(x-x_*))\otimes(\o\wedge(x-x_*))$. It is easily checked, using that $\dive(\o\wedge(x-x_*))=0$, that  the following algebraic properties hold true:
$$
\begin{cases}
\dive [(\o\wedge(x-x_*))\otimes(\o\wedge(x-x_*))]= \o\wedge(\o\wedge(x-x_*)) \\[3mm]
\dive [((x-x_*)\wedge(\o\wedge(x-x_*)))\otimes(\o\wedge(x-x_*))]=(x-x_*)\wedge[ \o\wedge(\o\wedge(x-x_*))].
\end{cases}
$$
Thus, recalling \eqref{eq.qs-solid}, we apply  the divergence theorem  to the second  and third term of the first equation as well as to those of the second equation and take into account the definition \eqref{eq.in-def} of the inertia matrix. We conclude that \eqref{eq.qs-solid}
 also reads as
\beq\label{eq.qs-solid-bis}
\left\{\begin{array}{lr}
\dis 0=\int_{\p E}\{\sigma -\rho_E(\o\wedge(x-x_*))\otimes(\o\wedge(x-x_*))+\rho_Egx_3\;\!\id\}\nu  d{\mathcal H}^2
\\[3mm]
\dis 0=\int_{\p E}(x-x_*)\wedge\{\sigma -\rho_E(\o\wedge(x-x_*))\otimes(\o\wedge(x-x_*))+\rho_Egx_3\;\!\id\}\nu  d{\mathcal H}^2\EEE.
\end{array}\right.
\eeq
Further, upon setting
$$
\kappa:=\sigma-\rho u\otimes u +\rho gx_3 \;\!\id=2\mu e(u)-  (p-\rho gx_3)\;\!\id -\rho u\otimes u,
$$
the system \eqref{eq.qs-fluid}, in which the boundary condition at $\infty$ has been replaced by \eqref{eq.gbc} on $\p D$, reads as
\beq\label{eq.qs-fluid-bis}
\left\{\begin{array}{lr}
{\rm div\;}\kappa=0,& \mbox{ on }D\setminus E\\[3mm]
\hdiv u=0,&  \mbox{ on }D\setminus E\\[3mm]
u= \psi - b,&\mbox{ on } \p D\\[3mm]
u=\o\wedge (x-x_*),&\mbox{ on }\p E.
\end{array}\right.
\eeq
We conclude that, {\it provided that the solid is neutrally buoyant, that is $\rho=\rho_E$}, then $\rho [u\otimes u]\nu=\rho_E[(\o\wedge(x-x_*))\otimes(\o\wedge(x-x_*))]\nu$ on $\p E$ and thus 
$$
\kappa\nu=\{\sigma -\rho_E(\o\wedge(x-x_*))\otimes(\o\wedge(x-x_*))+\rho_Egx_3\;\!\id\}\nu,
$$ 
so that
\eqref{eq.qs-solid-bis} merely reads as
$$
\left\{\begin{array}{lr}
\dis 0=\int_{\p E}\kappa\nu  d{\mathcal H}^2
\\[3mm]
\dis 0=\int_{\p E}(x-x_*)\wedge(\kappa\nu)  d{\mathcal H}^2.
\end{array}\right.
$$
We can thus, as in Remark \ref{rk.switch}, transfer the equilibrium conditions to $\p D$ using that ${\rm div\;}\kappa=0$ in $D\setminus E$. 
\par
 Absorbing the term 
$\rho gx_3$ into the pressure field, the problem finally reads as \eqref{eq.qs-fluid-bis} with
 \beq\label{eq.sselfeq3}
\left\{\begin{array}{lr}
 \langle\kappa\nu,1\rangle_{ \p D}=0 
\\[3mm]
 \langle (x-x_*)\wedge  \kappa\nu, 1\rangle_{ \p D}=0,
\end{array}\right.
 \eeq
where $\kappa$  is given by 
\beq\label{eq.dft}
\kappa:=\sigma-\rho u\otimes u=2\mu e(u)- p\;\!\id -\rho u\otimes u.
\eeq
  Similar arguments can be used to deal with the case $N=2$ (the weight can then be neglected), leading again to problem \eqref{eq.qs-fluid-bis}, \eqref{eq.sselfeq3}. 
 \par
  The introduction of the stress $\kappa$ is  key in addressing   a general obstacle $E$ without regular boundary.  
 \par
 
\vskip10pt
By application of  Bogovski's theorem  (see \cite[Theorem 1]{Bogovski79}) as in Remark \ref{rem:bound-uE}, we are always at liberty to assume that the boundary condition $\psi$  satisfies
\beq
\label{eq:supt-psi}
{\rm supp}\psi\subset \bar D\setminus D' \mbox{ for some Lipschitz open set $D'$ with $E\subset\subset D'\subset\subset D$}.
\eeq
This we will do from now onward. 
\par
 Assume \eqref{eq.domain} as before 
and consider the  (slightly modified)  set defined for any pair $(\o,b)\in \R^3\times \R^N$ as
\begin{multline*}
\Hspace{E}{\o}{b}(D) :=\text{Closure}_{H^1(D)}\{\phi \in C^\infty(\bar D;\R^N): \hdiv\phi=0,\\
 \phi \equiv \o \wedge (x-x_*)\mbox{ in a neighborhood of }E, \phi=-b \text{ on $\partial D$}\}.
\end{multline*}
(In the case $N=2$, $\o$ is of the form $\o\vec{e}_3$ so that the wedge product has to be understood as the wedge product of $\o$ with the $3$-vector $x+0\vec{e}_3$).

Self-equilibrated flows for a boundary datum $\psi\in H^1(D)$ with $\dive{\psi}=0$  and satisfying \eqref{eq:supt-psi} are solutions  $(u,p)$  of
 \begin{equation}
\label{eq:Navier-self}
\begin{cases}
\rho \dive(u\otimes u)-\mu\triangle u+\nabla p=0 \mbox{ in }D\setminus E\\[3mm]
 u-\psi \in \Hspace{E}{\o}{b}(D),  p\in   L^2_{loc}(D\setminus E)
\end{cases}
\end{equation}
such that
\begin{equation}
\label{eq:kappa}
\pscal{\kappa \nu}{1}_{\p D}=0\qquad\text{and}\qquad \pscal{(x-x_*)\wedge \kappa \nu}{1}_{\p D}=0,
\end{equation} 
where $\kappa$ is given in \eqref{eq.dft}.
\par
The search for self-equilibrated solutions \eqref{eq:Navier-self}-\eqref{eq:kappa} can be reduced to the following problem involving the velocity field alone: find a pair $(\o,b)$ and 
 $u$ with $u-\psi\in \Hspace{E}{\o}{b}(D)$ such that
\begin{equation}
\label{eq:weak-navier}
 2 \mu \int_{ D} e(u)\cdot e(\varphi)\,dx=\rho\int_{ D} (u\otimes u)\cdot e(\varphi)\,dx
\end{equation}
for every $\varphi \in \Hspace{E}{\tilde \o}{\tilde b}(D)$ {\it and} every
$({\tilde \o},{\tilde b})\in \R^3\times\R^N$.  Note that, by definition of $\Hspace{E}{\o}{b}(D)$, $u$ is in particular in $H^1(D)$.
\par
Indeed the following lemma holds.

\begin{lemma}
\label{lem:equiv}
If $u$ with $u-\psi\in \Hspace{E}{\o}{b}(D)$ solves \eqref{eq:weak-navier} then there exists $p\in L^2_{loc}(D\setminus E)$ such that $(u,p)$ solves \eqref{eq:Navier-self} and \eqref{eq:kappa}. Conversely, if  $(u,p)$ solves \eqref{eq:Navier-self} and \eqref{eq:kappa}, then $u$ solves \eqref{eq:weak-navier}.
\end{lemma}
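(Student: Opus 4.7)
My plan is to mirror the Stokes argument of Section~\ref{sec.galdi}: recover the pressure via De Rham's theorem and then use the divergence-free structure of $\kappa$ to transfer the self-equilibrium conditions from near $\partial E$ onto the smooth boundary $\partial D$. For the forward implication, I would first specialize \eqref{eq:weak-navier} to $(\tilde\omega,\tilde b)=(0,0)$ and restrict $\varphi$ to divergence-free fields in $C^\infty_c(D\setminus E;\R^N)\subset\Hspace{E}{0}{0}(D)$. The resulting identity says that $\mu\Delta u-\rho\dive(u\otimes u)$, viewed as a distribution on $D\setminus E$, annihilates divergence-free compactly supported smooth test fields, so De Rham's theorem yields a pressure distribution $p$ satisfying the strong equation in \eqref{eq:Navier-self}. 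Lemma~\ref{lem:lions} then upgrades $p$ to $L^2_{loc}(D\setminus E)$ (square summable near $\partial D$ after the usual constant normalization on the connected component of $D\setminus E$ containing $\partial D$), and the equation rewrites as $\dive\kappa=0$ in $D\setminus E$.

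Next, to obtain \eqref{eq:kappa}, I would fix any $(\tilde\omega,\tilde b)\in\R^3\times\R^N$ and construct a smooth divergence-free representative $\varphi_*\in C^\infty(\bar D;\R^N)\cap\Hspace{E}{\tilde\omega}{\tilde b}(D)$ equal to $\tilde\omega\wedge(x-x_*)$ on an open neighborhood $U\supset E$ with $\bar U\subset D$ and to $-\tilde b$ on $\partial D$. Such a $\varphi_*$ is produced by cutoff interpolation of the two rigid motions followed by a Bogovski correction of the residual divergence on $D\setminus\bar U$, exactly as in Remark~\ref{rem:bound-uE}. Since $e(\varphi_*)\equiv 0$ on $U$, the weak identity \eqref{eq:weak-navier} reduces to an integral over $D\setminus\bar U$; adding $-p\,\dive\varphi_*=0$ (legitimate because $p\in L^2(D\setminus\bar U)$) and using the symmetry of $\kappa$ rewrites that integral as $\int_{D\setminus\bar U}\kappa\cdot\nabla\varphi_*\,dx=0$. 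Integration by parts on the Lipschitz domain $D\setminus\bar U$, combined with $\dive\kappa=0$, will yield
$$
\tilde b\cdot\pscal{\kappa\nu}{1}_{\partial D}+\tilde\omega\cdot\pscal{(x-x_*)\wedge\kappa\nu_U}{1}_{\partial U}=0.
$$
The symmetry of $\kappa$ also forces $\dive((x-x_*)\wedge\kappa)=(x-x_*)\wedge\dive\kappa=0$ in $D\setminus E$, so one further divergence theorem on $D\setminus\bar U$ replaces the $\partial U$-integral above by its counterpart on $\partial D$. Letting $(\tilde\omega,\tilde b)$ range independently over $\R^3\times\R^N$ then forces both conditions in \eqref{eq:kappa}.

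The converse is a direct rerun of the same chain of identities. Given $(u,p)$ satisfying \eqref{eq:Navier-self} and \eqref{eq:kappa}, I would approximate an arbitrary $\varphi\in\Hspace{E}{\tilde\omega}{\tilde b}(D)$ in $H^1(D)$ by smooth divergence-free $\varphi_n$ equal to $\tilde\omega\wedge(x-x_*)$ on neighborhoods $U_n\supset E$ and to $-\tilde b$ on $\partial D$, rerun the computation of the previous paragraph with $\varphi_n$ in place of $\varphi_*$, and observe that the final boundary contribution now vanishes thanks to \eqref{eq:kappa}. Passing to the limit — using continuity of $\varphi\mapsto\int_D e(u)\cdot e(\varphi)\,dx$ and $\varphi\mapsto\int_D(u\otimes u)\cdot e(\varphi)\,dx$ on $H^1(D)$, with the latter relying on the Sobolev embedding $H^1(D)\hookrightarrow L^4(D)$ (valid because $N\le 3$, so $u\otimes u\in L^2$) — yields \eqref{eq:weak-navier} for the limiting $\varphi$.

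The main technical obstacle is the absence of regularity of $\partial E$, which forbids a direct integration by parts on $D\setminus E$. The rigid-motion structure of the members of $\Hspace{E}{\tilde\omega}{\tilde b}(D)$ in a neighborhood of $E$ is what saves the argument, since it allows me to localize the analysis to the Lipschitz subdomain $D\setminus\bar U$ where classical tools apply. A second crucial observation is the symmetry of $\kappa$: it is precisely this symmetry that makes the tensor $(x-x_*)\wedge\kappa$ again divergence-free and, in turn, enables the transport of the torque integral from $\partial U$ (which sits close to the irregular $\partial E$) to the smooth boundary $\partial D$.
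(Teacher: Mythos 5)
Your argument is correct and follows essentially the same route as the paper: recover the pressure via De Rham's theorem and Lemma \ref{lem:lions}, rewrite the equation as $\dive\kappa=0$ with $\kappa\in L^2$ away from $E$, and integrate by parts on a Lipschitz region avoiding $\partial E$ to transfer the force and torque conditions to $\partial D$. The only (harmless) difference is that the paper first replaces the test field $\varphi$ by $\varphi-\tilde\o\wedge(x-x_*)$, which vanishes near $E$ and has the same symmetrized gradient, so a single integration by parts places both the force and the torque directly on $\partial D$, whereas you keep $\varphi_*$ as is and therefore need the extra (correct) observation that the symmetry of $\kappa$ makes $(x-x_*)\wedge\kappa$ divergence free in order to move the torque term from $\partial U$ to $\partial D$.
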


\begin{proof}
For a given pair $(\tilde  \o, \tilde b)$, equality \eqref{eq:weak-navier} is satisfied if and only if it is also satisfied for every  $\tilde \varphi \in H^1(D)$ with $\tilde \varphi\equiv 0$ in a neighborhood of $E$ and $\tilde \varphi=-(\tilde \o\wedge (x-x^*)+\tilde b)$ on $\partial D$. 

In particular, choosing  smooth  tests with compact support in $D\setminus E$  (so taking $\tilde\o=\tilde b=0$ for those)  we recover a pressure field $p\in L^2_{loc}(D\setminus E)$ such that the tensor $\kappa$ in \eqref{eq.dft} satisfies
$$
\dive \kappa=0 \quad\text{on }D\setminus E.
$$
Moreover, proceeding as in  Section \ref{sub.Gen},  $p\in L^2(D\setminus \overline{D'})$  (recall \eqref{eq:supt-psi}),  hence $\kappa\in L^2(D\setminus \overline{D'})$, so that integration by parts entails that
$$
\tilde b \cdot \langle \kappa \nu,  1\rangle_{\partial D}+\tilde \o \cdot \langle (x-x_*)\wedge \kappa \nu,  1 \rangle_{\partial D}=0.
$$
Relations \eqref{eq.sselfeq3} follow since $\tilde \o$ and $\tilde b$ are arbitrary. 
\par
The reverse implication is similar, and the proof is concluded.
\end{proof}

\EEE

\subsection{Existence of self-equilibrated solutions}\label{subsec:ses-navier}
  
We prove the existence of self-equilibrated solutions by addressing the equivalent problem \ref{eq:weak-navier} through a Banach fixed point argument, provided that the external boundary condition satisfies a suitable smallness assumption  -- essentially akin to imposing a small enough Reynolds number -- and that the mass density of the fluid is also that of the obstacle (neutral buoyancy).

 The proof will use the following variant of Korn's inequality.

\begin{lemma}[Korn]
\label{lem:Korn} 
Let $E$ satisfy \eqref{eq.domain}. There exists a constant $C_{D,E}>0$ such that, for every $u\in H^1(D;\R^N)$ with $u=0$ in a neighborhood of $E$, 
\begin{equation}
\label{eq:kornE}
\|u\|_{H^1(D)}\le C_{D,E}\|e(u)\|_{L^2(D)}.
\end{equation}
Further, if $E$ is restricted to be such that ${\mathcal L}^N(E)\ge m$ for some fixed constant $m>0$,  then $C_{D,E}= C_{D,m}$, that is the constant in \eqref{eq:kornE} depends
only on $D$ (and $m$ which will remain fixed).
\end{lemma}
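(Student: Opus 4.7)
\medskip

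\noindent\textbf{Proof proposal.} The plan is to reduce to a Poincar\'e-type control $\|u\|_{L^2(D)}\le C\,\|e(u)\|_{L^2(D)}$, then argue by contradiction using weak $H^1$--strong $L^2$ compactness together with the fact that the zero-trace constraint on $E$ is preserved in the limit in a capacitary sense. Because $D$ has Lipschitz boundary, the second Korn inequality
$$
\|\nabla v\|_{L^2(D)}\le C_D\bigl(\|e(v)\|_{L^2(D)}+\|v\|_{L^2(D)}\bigr),\qquad v\in H^1(D;\R^N),
$$
is available on $D$, so it suffices to prove the Poincar\'e-type estimate for $u$'s vanishing near $E$.

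Suppose \eqref{eq:kornE} fails. Then there exists a sequence $u_n\in H^1(D;\R^N)$ with $u_n\equiv 0$ on an open neighborhood of $E$, $\|u_n\|_{H^1(D)}=1$ and $\|e(u_n)\|_{L^2(D)}\to 0$. By Rellich--Kondrachov, up to extraction, $u_n\weak u$ weakly in $H^1(D;\R^N)$ and strongly in $L^2(D;\R^N)$, and lower semicontinuity gives $e(u)\equiv 0$, so $u(x)=Ax+b$ with $A$ skew-symmetric. Since each $u_n$ vanishes near $E$, its precise representative vanishes ${\rm cap}_2$-quasi-everywhere on $E$; and the set
$$
\{v\in H^1(D;\R^N)\,:\,\tilde v=0\ {\rm cap}_2\text{-q.e. on }E\}
$$
is convex and strongly closed in $H^1$, hence weakly closed (this is the standard fact that $H^1$-convergence implies ${\rm cap}_2$-quasi-everywhere convergence along a subsequence of the precise representatives). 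Therefore $Ax+b=0$ ${\rm cap}_2$-q.e. on $E$. Repeating the argument used at the end of the proof of Proposition \ref{prop:galdi0}, the zero set of a nonzero affine rigid motion is contained in an affine subspace of dimension at most $N-2$, hence of zero $2$-capacity for $N=2,3$; as ${\rm cap}_2(E)>0$, this forces $u\equiv 0$. But then $u_n\to 0$ strongly in $L^2$, and combined with $e(u_n)\to 0$ the second Korn inequality on $D$ yields $u_n\to 0$ in $H^1(D;\R^N)$, contradicting $\|u_n\|_{H^1(D)}=1$. This proves \eqref{eq:kornE}.

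For the uniform version, I repeat the contradiction with sequences $(E_n,u_n)$ satisfying $\LN(E_n)\ge m$, $u_n=0$ near $E_n$, $\|u_n\|_{H^1(D)}=1$ and $\|e(u_n)\|_{L^2(D)}\to 0$. By Blaschke compactness, up to extraction $E_n\to E$ in the Hausdorff topology for some compact $E\subseteq \ol D$; as in the proof of Theorem \ref{thm:shape}, $\LN(E)\ge m$ and in particular ${\rm cap}_2(E)>0$. Extract $u_n\weak u$ as before, with $u=Ax+b$ a rigid motion. The delicate point is that $u_n$ is forced to vanish on $E_n$, not on $E$, so the preceding closedness argument does not apply directly to $E$. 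I bypass this by passing to a weak-$*$ limit $1_{E_n}\weakst\chi$ in $L^\infty(D)$: for any compact $K\subset D\setminus E$ the Hausdorff convergence yields $K\cap E_n=\emptyset$ for large $n$, so $\chi=0$ a.e. on $D\setminus E$; on the other hand $\int_D\chi\,dx=\lim_n \LN(E_n)\ge m$. Since $u_n\to u$ strongly in $L^2(D;\R^N)$, one has $|u_n|^2\to |u|^2$ strongly in $L^1(D)$, and the weak-$*$/strong $L^1$ pairing gives
$$
0=\int_D 1_{E_n}|u_n|^2\,dx\ \longrightarrow\ \int_D\chi\,|u|^2\,dx.
$$
Hence $u=0$ $\LN$-a.e. on $\{\chi>0\}$, a set of positive $\LN$-measure; as $u$ is affine this forces $u\equiv 0$, and the contradiction is again obtained through the second Korn inequality on $D$. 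The main obstacle throughout is precisely this last step: Hausdorff convergence of $E_n$ to $E$ does not by itself transfer the pointwise constraint, and the weak-$*$ limit of the indicators is what encodes the lower volume bound $\LN(E_n)\ge m$ and allows one to conclude.
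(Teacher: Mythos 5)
Your proof is correct and follows essentially the same contradiction--compactness strategy as the paper's, with two small technical variations. For the fixed obstacle you pass to the weak limit via Mazur's lemma (weak closedness of the convex, strongly closed set $\{v:\tilde v=0\ {\rm cap}_2$-q.e.\ on $E\}$), whereas the paper first upgrades $u_n\to u$ to strong $H^1$ convergence (possible since $e(u_n)\to 0$ and $u_n\to u$ in $L^2$) and then extracts a ${\rm cap}_2$-q.e.\ convergent subsequence; both are sound. For the uniform constant, the paper applies a reverse Fatou argument to $\limsup_n 1_{E_n}$ to produce a set-theoretic limsup set $F$ with $\LN(F)\ge m$ on which $u$ vanishes a.e.\ (using a.e.\ convergence of $u_n$), while you instead pair the weak-$*$ limit $\chi$ of $1_{E_n}$ against the strongly $L^1$-convergent $|u_n|^2$ to get $\int_D\chi|u|^2\,dx=0$ with $\LN(\{\chi>0\})>0$; these are interchangeable devices, and your detour through Blaschke/Hausdorff compactness in that step is harmless but not actually needed.
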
 
\begin{proof}
By Korn's inequality on $D$,
\begin{equation}
\label{eq:equiv}
\|u\|_{H^1(D)}\le C_D(\|e(u)\|_{L^2(D)}+\|u\|_{L^2(D)})
\end{equation}
so that it suffices to prove that, for some constant $C_{D,E}$,
$$
\|u\|_{L^2(D)}\le C_{D,E}\|e(u)\|_{L^2(D)}.
$$
We proceed by contradiction. Assume that $u_n\in H^1(D)$ is such that
$$
\|u_n\|_{L^2(D)}>n\|e(u_n)\|_{L^2(D)}.
$$
 We may assume  that $u_n$ is smooth   with, upon renormalizing,
$$
\|u_n\|_{L^2(D)}=1\qquad\text{and}\qquad \|e(u_n)\|_{L^2(D)}<\frac{1}{n}.
$$

By \eqref{eq:equiv}, $(u_n)_{n\in\N}$ is bounded in $H^1(D)$. By  compact Sobolev embedding, for a subsequence, 
$
u_n\to u$ strongly in $L^2(D)
$
for some $u\in H^1(D)$ such that
$
\|u\|_{L^2(D)}=1$ and $e(u)=0 \text{ in $D$}$. Thus $u=\o\wedge x+b$ for some pair $(\o,b)\in \R^3\times\R^N$.

Further, using \eqref{eq:equiv} again,
$
u_n\to u \mbox{ strongly in }H^1(D),
$
 so that  \cite[Theorem 5.11]{KLV} implies that, for a subsequence, 
$u_n\to \tilde u$  ${\rm cap}_2$-q.e. on $D$. In particular, since $u_n=0$ on $E$, $\tilde u=0$ ${\rm cap}_2$-q.e. on $E$.  Exactly as in the proof of Proposition \ref{prop:galdi0}, this yields in turn that $\o=b=0$, in contradiction with $\|u\|_{L^2(D)}=1$.

To prove the last part of the Lemma we argue by contradiction as well and, as before, consider   a sequence  of obstacles $E_n$
and a sequence $u_n\in  H^1(D)$,  with
$u_n=0$ in a neighborhood of $E_n$, such that
$$
\|u_n\|_{L^2(D)}> n\|e(u_n)\|_{L^2(D)} \text{ and } \|u_n\|_{L^2(D)}=1.
$$
But then   $u_n \to u$ strongly in $H^1(D;\R^N)$ with $u=\o\wedge x+b$ for some pair $(\o,b)\ne (0,0)$. Now, since
$$
\int_{D} \limsup_n 1_{E_n}\,dx \ge \limsup_n \int_D 1_{E_n}\,dx \ge m
$$
we infer that the set-theoretic limsup set
$
F:=\{x\in D\,:\, \limsup_n 1_{E_n}(x)=1\}
$
of the sequence $(E_n)_{n\in\N}$
satisfies ${\mathcal L}^N(F)\ge m$. Since, up to a subsequence, $u_n\to u$  a.e. in $D$, we obtain that, for a.e. $x\in F$,
$$
\o \wedge x+b=0
$$
which is against $(\o,b)\not=(0,0)$.
\end{proof}

\begin{theorem}[\bf A generalized quasi-static theorem] 
\label{thm.qs} 
Assume that  $E\subset\subset D'\subset\subset D$  satisfies \eqref{eq.domain}, that the dimension $N$ is $2$ or  $3$  and that $\rho=\rho_E$. \EEE Consider $\psi\in H^1(D;\R^N)$ with ${\rm div\;} \psi=0$ and with support in $\bar D\setminus D'$.
\par
Then, for some $M_{D,E}>0$ and $M'_{D,E}>0$ only depending on $D$ and $E$, if
$$
({\rho}/{\mu})\|\psi\|_{H^1(D)}\le M_{D,E}, 
$$
there exists a unique pair $(\o_E, b_E)\in \R^{3}\times\R^N$ and a unique  $u_E$ with $u_E-\psi\in \Hspace{E}{\o_E}{b_E}(D)$  and 
$\|e(u_E)\|_{L^2(D)}\le M'_{D,E}$
such that \eqref{eq:weak-navier} is satisfied.

\end{theorem}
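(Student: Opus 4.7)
The plan is to apply Banach's fixed point theorem to the nonlinear map $T$ sending a velocity $v$ to the self-equilibrated solution $u$ of the \emph{linear} Stokes problem forced by the stress $\rho\,v\otimes v$.

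\textbf{Step 1 (Forced linear self-equilibrated Stokes).} I would first establish the following forced analog of Theorem \ref{thm.Gg}: for every $f\in L^2(D;\R^{N\times N})$ there exists a unique triple $(u,\o,b)$ with $u-\psi\in\Hspace{E}{\o}{b}(D)$ such that
$$
2\mu\int_D e(u)\cdot e(\varphi)\,dx=\int_D f\cdot e(\varphi)\,dx
$$
for every $(\tilde\o,\tilde b)\in\R^3\times\R^N$ and every $\varphi\in\Hspace{E}{\tilde\o}{\tilde b}(D)$, together with the a priori bound $\|u\|_{H^1(D)}+|\o|+|b|\le C_{D,E}\bigl(\|\psi\|_{H^1(D)}+\|f\|_{L^2(D)}/\mu\bigr)$. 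The construction mirrors the three-step strategy of Theorem \ref{thm.Gg}: first solve the linear Stokes problem on $\psi+\Hspace{E}{0}{0}(D)$ with forcing $f$ by minimizing the strictly convex coercive functional $\mu\int_D|e(w)|^2\,dx-\int_D f\cdot e(w)\,dx$, coercivity being supplied by Lemma \ref{lem:Korn} and a Bogovski boundary lift as in Remark \ref{rem:bound-uE}; then cancel the residual self-equilibration defect on $\partial D$ by superposing the unique linear combination of the $N(N+1)/2$ elementary Stokes solutions carrying only nontrivial rigid-motion data on $E$. Independence of the associated self-equilibration vectors is precisely Proposition \ref{prop:galdi0}, which both closes the finite-dimensional linear system and delivers the stated a priori bound.

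\textbf{Step 2 (Fixed point setup and ball invariance).} The affine set $\mathcal W:=\psi+\bigcup_{(\o,b)}\Hspace{E}{\o}{b}(D)$, endowed with the metric $d(v_1,v_2):=\|v_1-v_2\|_{H^1(D)}$, is a complete metric space. For $v\in\mathcal W$ set $T(v):=u$ with $(u,\o,b)$ supplied by Step 1 for $f:=\rho\,v\otimes v$, so that $T$ sends $\mathcal W$ into itself. Since $N\in\{2,3\}$, the Sobolev embedding $H^1(D)\hookrightarrow L^4(D)$ yields $\|v\otimes v\|_{L^2(D)}\le C_D\|v\|_{H^1(D)}^2$, and Step 1 gives
$$
\|T(v)\|_{H^1(D)}\le C_{D,E}\bigl(\|\psi\|_{H^1(D)}+(\rho/\mu)\,C_D\|v\|_{H^1(D)}^2\bigr).
$$
Choosing $R:=2C_{D,E}\|\psi\|_{H^1(D)}$ and imposing $(\rho/\mu)\|\psi\|_{H^1(D)}\le M_{D,E}$ with $M_{D,E}>0$ sufficiently small guarantees that $T$ leaves $B_R:=\{v\in\mathcal W:\|v\|_{H^1(D)}\le R\}$ invariant.

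\textbf{Step 3 (Contraction and conclusion).} For $v_1,v_2\in B_R$, the difference $T(v_1)-T(v_2)$ solves a linear problem of the Step 1 type with $\psi=0$ and forcing
$$
\rho(v_1\otimes v_1-v_2\otimes v_2)=\rho\bigl[(v_1-v_2)\otimes v_1+v_2\otimes(v_1-v_2)\bigr].
$$
The Step 1 bound combined with the $L^4$ embedding then produces
$$
\|T(v_1)-T(v_2)\|_{H^1(D)}\le 2C_{D,E}C_D(\rho/\mu)R\,\|v_1-v_2\|_{H^1(D)}.
$$
Shrinking $M_{D,E}$ further if necessary makes the prefactor strictly smaller than one, and $T$ becomes a strict contraction on $(B_R,d)$. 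Banach's fixed point theorem then provides a unique fixed point $u_E\in B_R$; its associated $(\o_E,b_E)$ from Step 1 yields the theorem, with $M'_{D,E}:=R$ bounding $\|e(u_E)\|_{L^2(D)}\le\|u_E\|_{H^1(D)}\le R$. The principal obstacle is Step 1: carrying the Galdi construction of Theorem \ref{thm.Gg} over to a forced Stokes problem on an irregular obstacle and controlling the finite-dimensional unknowns $(\o,b)$ uniformly in $f$ by means of Proposition \ref{prop:galdi0} and Lemma \ref{lem:Korn}. Once this linear piece is in hand, the nonlinear loop is routine Banach fixed point on a small ball.
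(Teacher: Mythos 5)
Your proposal is correct and follows the paper's overall architecture: a Banach fixed point for the map sending $v$ to the solution of the linearized, self-equilibrated Stokes problem forced by $\rho\,v\otimes v$, with invariance of a small ball and contraction both controlled by the $H^1\hookrightarrow L^4$ embedding and the smallness of $(\rho/\mu)\|\psi\|_{H^1(D)}$. Where you genuinely diverge is in the linear sub-step. The paper does not rerun the Galdi superposition (Dirichlet solve plus finite-dimensional correction through the elementary solutions and Proposition \ref{prop:galdi0}); instead it first proves that the union $X_E:=\bigcup_{\o,b}\Hspace{E}{\o}{b}(D)$ is a \emph{closed subspace} of $H^1(D)$ --- this rests on the estimate $|\o|+|b|\le C_{D,E}\|e(v)\|_{L^2(D)}$, itself a consequence of the Korn inequality of Lemma \ref{lem:Korn} --- and then obtains $T(v)$ in one stroke by minimizing the quadratic functional over the affine space $\psi+X_E$. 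Since the Euler--Lagrange equation is then tested against \emph{all} of $X_E$, the self-equilibration conditions appear automatically as natural boundary conditions (Lemma \ref{lem:equiv}), and the a priori bound follows by comparison with the competitor $\psi$. Your superposition route works too, but it obliges you to (i) adapt Theorem \ref{thm.Gg} to the Section 4 spaces, where the translation enters as $u=-\tilde b$ on $\partial D$ rather than as part of the rigid datum on $E$, and to a forced equation, and (ii) control the correction coefficients uniformly in $f$, which is fine because the defect matrix depends only on $E$. One point you should not leave implicit: the completeness of your metric space $\mathcal W=\psi+X_E$ is exactly the closedness of $X_E$ in $H^1(D)$, which is not automatic from the definition of the individual spaces $\Hspace{E}{\o}{b}(D)$ and requires the same Korn-based control of $(\o,b)$ along convergent sequences; the paper devotes its Step 1 to this. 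Finally, your choice $M'_{D,E}:=R=2C_{D,E}\|\psi\|_{H^1(D)}$ depends on $\psi$; to match the statement, bound it by $2C_{D,E}(\mu/\rho)M_{D,E}$ using the smallness hypothesis, in line with the paper's choice \eqref{eq.val-M}.
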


\begin{proof}
We  will prove the existence of a solution to \eqref{eq:weak-navier} through a Banach fixed point argument.

\vskip10pt\noindent{\bf Step 1.}
Set
$$
X_{E}:=\bigcup_{\o,b} \Hspace{E}{\o}{b}(D)
$$
We claim that $X_E$ is a closed subspace of $H^1(D)$.
\par
In order to prove this, let us first notice that there exists $C_{D,E}>0$ such that for every $v\in \Hspace{E}{\o}{b}(D)$ we have
\begin{equation}
\label{eq:est-obv}
|\o|+|b|\le C_{D,E} \|e(v)\|_{L^2(D)}.
\end{equation}
 This is a consequence of the continuity of the trace operator from $H^1(D)$ to $L^2(\partial D)$ and of the Korn inequality of Lemma \ref{lem:Korn} applied to the function $v-\o\wedge (x-x_*)$, together with the fact that infinitesimal rigid body motions form a finite dimensional space.
\par
 To prove that $X_E$ is a closed subspace of $H^1(D)$, let $v_n\in X_{E}$ with 
$v_n \in \Hspace{E}{\o_n}{b_n}(D)$ and $v_n\to v$ strongly in $H^1(D)$. By definition of the space $\Hspace{E}{\o_n}{b_n}(D)$, it is not restrictive to assume that $v_n$ is smooth and $v_n=\o_n\wedge (x-x_*)$ in a neighborhood of $E$. 
 From \eqref{eq:est-obv} we get
$$
|\o_n|+|b_n|\le  C_{D,E} \|e(v_n)\|_{L^2(D)}.
$$
Thus $(\o_n,b_n)$ is a bounded sequence in $\R^3\times \R^N$, so that, up to a subsequence, we may assume
$\o_n\to \o$ and $b_n\to b$. 
\par
 By application of  Bogovski's theorem (see \cite[Theorem 1]{Bogovski79}) we can construct a smooth function $\eta_n\in C^\infty(\bar D;\R^N)$ such that $\eta_n\to 0$ strongly in $H^1(D)$ and
$$
\dive{\eta_n}=0 \text{ in $D$},\quad \eta_n=-\o_n\wedge (x-x_*)+\o\wedge (x-x_*) \text{ in $D'$}\quad \eta_n=-b_n+b \text{ on $\partial D$}.
$$
Then the smooth function $\tilde v_n:=v_n+\eta_n$  belongs to  $\Hspace{E}{\o}{b}(D)$
with $\tilde v_n\to v$ strongly in $H^1(D)$. 
This shows that $v\in \Hspace{E}{\o}{b}(D)$, and thus $v\in X_E$.

\vskip10pt\noindent{\bf Step 2.} 

For every $v$ with $v-\psi\in X_{E}$, let  $T(v)$ with $T(v)-\psi\in X_{E}$ 
\EEE
 be the solution to
\beq\label{eq.Tveq}
2\mu\int_{D}  e(T(v))\cdot e(\ph) \ dx= \rho \int_{D} (v\otimes v)\cdot e(\ph) \ dx
\eeq
for every $\varphi\in  X_E$. Since  $v\otimes v\in L^4(D)$ by Sobolev embedding, the solution $T(v)$ can be recovered by minimizing the functional
\begin{equation}
\label{eq:funct}
u \mapsto \int_{D}|e(u)|^2\,dx-\frac{\rho}{\mu}\int_{D}(v\otimes v):e(u)\,dx
\end{equation}
 over the functions  $u$ such that $u-\psi\in X_{E}$. The minimization is well posed because $X_E$ is closed in $H^1(D)$ by Step 1 and because a control on the $L^2$ norm of the symmetrized gradient is enough to get a bound in  $H^1$-norm. Indeed, thanks to 
 Korn's inequality on $D$, 
\begin{equation}
\label{eq:Korn-v}
\|u\|_{H^1(D)}\le  C_D \left( \|u\|_{L^2(\partial D)}+\|e(u)\|_{L^2(D)}\right).
\end{equation}
 In view of estimate \eqref{eq:est-obv},  
$$
\|u\|_{L^2(\p D)}\le  C_D (|\o|+|b|+\|\psi\|_{L^2(\p D)})
\le C_{D,E}\left( \|e(u)\|_{L^2(D)}+\|\psi\|_{H^1(D)}\right).
$$
Hence
\beq
\label{eq:Korn-v-bis}
\|u\|_{H^1(D)}\le C_{D,E}\left( \|e(u)\|_{L^2(D)}+\|\psi\|_{H^1(D)}\right).
\eeq
To get a bound for the solution $T(v)$, we first note that the boundary datum $\psi$, because of where it is supported, is an admissible competitor for the minimization of \eqref{eq:funct}. Thus,  
\begin{multline*}
\|e(T(v))\|^2_{L^2(D)}
\le \frac{\rho}{\mu}\int_{D}(v\otimes v)\cdot e(T(v))\,dx+\int_{ D}|e(\psi)|^2\,dx-\frac{\rho}{\mu}\int_{D}(v\otimes v)\cdot e(\psi)\,dx\\
\le \frac{1}{2}\|e(T(v))\|^2_{L^2(D)}+\frac{\rho^2}{2\mu^2}\|v\|^4_{L^4(D)}+\|e(\psi)\|^2_{L^2(D)}+\frac{\rho^2}{2\mu^2}\|v\|^4_{L^4(D)}+\frac{1}{2}\|e(\psi)\|^2_{L^2(D)}
\end{multline*}
which yields
$$
\|e(T(v))\|_{L^2(D)}\le \sqrt{3}\|e(\psi)\|_{L^2(D)}+ \sqrt 2 \frac{\rho}{\mu} \|v\|^2_{L^4(D)}.
$$
Thanks to \eqref{eq:Korn-v-bis} and  Sobolev embedding, 
\begin{equation}
\label{eq:L4}
\|v\|_{L^4(D)}\le C_D\|v\|_{H^1(D)}\le  C_{D,E}\left( \|e(v)\|_{L^2(D)}+\|\psi\|_{H^1(D)}\right),
\end{equation}
so that we conclude to the existence of a constant  $ C_{D,E}>0$ such that 
\begin{equation}
\label{eq:estTv}
\|e(T(v))\|_{L^2(D)}\le  C_{D,E} \left(\|\psi\|_{H^1(D)}+\frac{\rho}{\mu}\|\psi\|^2_{H^1(D)}+\frac{\rho}{\mu} \|e(v)\|^2_{L^2(D)}\right).
\end{equation}

\vskip10pt\noindent{\bf Step 3.}  We claim that we can find $R>0$ such that, considering the closed subset of $H^1(D)$
$$
K:=\{v:\, v-\psi\in X_{E}, \|e(v)\|_{L^2(D)}\le R\},
$$
the map $v\mapsto T(v)$ defined in Step 2 is a strict contraction from $K$ into itself with respect to the distance 
$$
d(v,v'):=\|e(v)-e(v')\|_{L^2(D)}.
$$
 This distance is equivalent to that induced by the norm of $H^1(D)$ because, by \eqref{eq:Korn-v} then  \eqref{eq:est-obv} applied to $v-v'\in X_E$,
$$
\|v-v'\|_{H^1(D)}\le C_D(\|v-v'\|_{L^2(\partial D)}+\|e(v)-e(v')\|_{L^2(D)})
\le C_{D,E}\|e(v)-e(v')\|_{L^2(D)}.
$$
In view of \eqref{eq:estTv}, if 
\begin{equation}
\label{eq:psi1}
C_{D,E}\left(\|\psi\|_{H^1(D)}+\frac{\rho}{\mu}\|\psi\|^2_{H^1(D)}\right) \le \alpha,
\end{equation}
then,  for $\|e(v)\|_{L^2(D)}\le R$,  
$$
\|e(T(v))\|_{L^2(D)}\le C_{D,E} \frac{\rho}{\mu}R^2+\alpha.
$$
If $0<\beta<1$ and $R>0$ are such that
\begin{equation}
\label{eq:R}
C_{D,E}\frac{\rho}{\mu}R^2+\alpha= \beta R,
\end{equation}
 we get $\|e(T(v))\|_{L^2(D)}\le \beta R<R$, so that $T(v)\in K$. Making the quadratic expression associated to \eqref{eq:R} a perfect square, possible values for $R$ and $\alpha$ are 
\begin{equation}
\label{eq:R2}
R:=\frac{\beta}{2 C_{D,E}\frac{\rho}{\mu}}\qquad\text{and}\qquad \alpha:=\frac{\beta^2}{4 C_{D,E}\frac{\rho}{\mu}}.
\end{equation}
The restriction \eqref{eq:psi1} on $\psi$ now reads as
\begin{equation}
\label{eq:psi}
\frac{\rho}{\mu}\|\psi\|_{H^1(D)}+\frac{\rho^2}{\mu^2}\|\psi\|^2_{H^1(D)} \le \frac{\beta^2}{4  C^2_{D,E}}.
\end{equation}
Let us now check that $T$ is a contraction on $K$. If $v,v'\in K$, then from \eqref{eq.Tveq} 
$$
 d(T(v),T(v')) =\|e(T(v))-e(T(v'))\|_{L^2(D)}
\le \frac{\rho}{ 2\mu} \|v\otimes v-v'\otimes v'\|_{L^2(D)}.
$$
Then, using a simple estimate as in \cite[Equation (16.12)]{tartar} together with  \eqref{eq:L4} , this also reads as 
\begin{multline*}
 d(T(v),T(v')) \le \frac\rho\mu (\|v\|_{L^4(D)}  + \|v'\|_{L^4(D)})
\|v-v'\|_{L^4(D)}\\[3mm]\le C^2_{D,E} \frac\rho\mu (\|e(v)\|_{L^2(D)}  + \|e(v')\|_{L^2(D)}+2\|\psi\|_{H^1(D)})\|e(v-v')\|_{L^2(D)}\\[3mm]\le  2  C^2_{D,E}\frac\rho\mu (R+\|\psi\|_{H^1(D)}) d(v,v').
\end{multline*}
Recalling \eqref{eq:R2} and \eqref{eq:psi} we get
$$
d(T(v),T(v')) \le \left(C_{D,E}\beta+\frac{\beta^2}{2}\right)d(v,v'),
$$
that is $T$ is a strict contraction provided that $\beta$ is chosen small enough.

\vskip10pt\noindent{\bf Step 4: Conclusion} In view of Step 2 and Step 3, the conclusion follows by applying the Banach Fixed Point Theorem and choosing
\beq\label{eq.val-M}
M_{D,E}:=\frac{-1+\sqrt{1+{\beta^2}{ / C^2_{D,E} }}}{2}\qquad\text{and}\qquad M'_{D,E}:=\frac{\beta}{2 C_{D,E}\frac{\rho}{\mu}}.
\eeq
with  $0<\beta<1$ small enough.
\end{proof}
\EEE

\begin{remark}
\label{rem:uniform}
 If the obstacle $E$ is further constrained to satisfy ${\mathcal L}^N(E)\ge m$ for some fixed $m>0$, the constants $M_{D,E}$ and $M'_{D,E}$ in Theorem \ref{thm.qs}  only depend on $D$ and $m$ (and are thus renamed $M_{D,m}$ and $M'_{D,m}$).
 In view of \eqref{eq.val-M}, this amounts to proving that  the constant $C_{D,E}$ appearing in \eqref{eq:est-obv}  is uniformly bounded for $E$ varying in $\bar D'\subset D$ and provided that  $v\in \Hspace{E}{\o}{b}(D)$. Indeed this is the only time in the proof of Theorem \ref{thm.qs} that the constants used in the various estimates depend a priori on both $D$ and $E$. But this is an immediate consequence of the latter part of Lemma \ref{lem:Korn}. 
\par
As a consequence, thanks to inequality \eqref{eq:Korn-v-bis}, the solution $u_E$ satisfies

$$
\|u_E\|_{H^1(D)}\le
C_{D,m} (\|e(u_E)\|_{L^2(D)}+\|\psi\|_{H^1(D)})
\le C_{D,m} \left(M'_{D,m}+\frac\mu\rho M_{D,m}\right)=: C_{D,m}.
$$
\hfill\P\end{remark}

\subsection{An optimization result in dimension two}
\label{subsec:stab-navier}

In this subsection we extend the shape optimization result of Theorem \ref{thm:shape} to the 2d Navier-Stokes setting.
\par
 As in  Subsection \ref{subsec:shape},  let $D\subset \R^2$ be an open bounded set with Lipschitz boundary, and  fix $D'\subset\subset D$ open. 
Given $0<m<{\mathcal L}^2(D')$, consider the  same family of obstacles $
{\mathcal E}_m$ as in \eqref{eq.fo}. 
\par
Consider $\psi\in H^1(D;\R^2)$ with $\hdiv \psi=0$ and with support in $D\setminus D'$. By Theorem \ref{thm.qs}  and Remark \ref{rem:uniform}, we know that, if $({\rho}/{\mu})\|\psi\|_{H^1(D)}$ is sufficiently small,  there exists for every $E\in {\mathcal E}_m$ a unique self-equilibrated flow $u_E$ with associated velocity $(\o_E,b_E)\in \R\times \R^2$ on $E$.
\par
The following result holds.

\begin{theorem}
\label{thm:shape-navier}
Let $\psi\in H^1(D;\R^2)$ with ${\rm div}\;\!\psi=0$ and with support in $\bar D\setminus D'$ such that
$$
\frac{\rho}{\mu}\|\psi\|_{H^1(D;\R^2)}\le  M_{D,m},
$$
where  $M_{D,m}$  is given by Theorem \ref{thm.qs} and Remark \ref{rem:uniform}. 
\par
If $f:\R\times \R^2\to \bar \R$ is lower semicontinuous, then the minimum problem 
$$
\min_{E\in {\mathcal E}_m} f(\o_E,b_E)
$$
is well posed,  while if $f$ is upper semicontinuous, then the maximum problem
$$
\max_{E\in {\mathcal E}_m} f(\o_E,b_E)
$$
is well posed.
\end{theorem}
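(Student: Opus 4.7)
The plan is to mirror the proof of Theorem~\ref{thm:shape}, substituting for Theorem~\ref{thm:stability} a Navier--Stokes stability statement that works in the small-data regime of Theorem~\ref{thm.qs} and Remark~\ref{rem:uniform}. Take a minimizing (resp.\ maximizing) sequence $E_n\in{\mathcal E}_m$. Blaschke compactness and the argument already displayed in the proof of Theorem~\ref{thm:shape} yield, up to a subsequence, $E_n\to E$ in the Hausdorff topology with $E$ connected and ${\mathcal L}^2(E)\ge m$; in particular $E\in{\mathcal E}_m$ and ${\rm cap}_2(E)>0$. It then suffices to show that $(\o_{E_n},b_{E_n})\to(\o_E,b_E)$, whereupon lower (resp.\ upper) semicontinuity of $f$ delivers the conclusion.

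By Remark~\ref{rem:uniform}, the sequence $u_{E_n}$ is uniformly bounded in $H^1(D;\R^2)$, and the uniform Korn-type bound \eqref{eq:est-obv} together with \eqref{eq:Korn-v-bis} gives a uniform bound on $(\o_{E_n},b_{E_n})$. Extracting further, $u_{E_n}\weak u$ weakly in $H^1(D)$, strongly in $L^4(D)$ by the compact two-dimensional Sobolev embedding, and $(\o_{E_n},b_{E_n})\to(\o,b)$ for some $(\o,b)\in\R\times\R^2$.

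The crux is identifying the limit as the unique self-equilibrated flow on $E$. Admissibility $u-\psi\in\Hspace{E}{\o}{b}(D)$ follows exactly as in Step~1 of Theorem~\ref{thm:stability}: using Bogovski's theorem we build divergence-free liftings $\tilde\psi_n\to\tilde\psi$ strongly in $H^1(D)$, with $\tilde\psi_n=\o_n\vec e_3\wedge x+b_n$ on a Lipschitz set $D''$ containing $E_n$ for large $n$ and containing $E$, and $\tilde\psi_n=\psi$ on $\partial D$; then Šverák's projection convergence \cite[Theorem~4.1]{Sverak93} applied in $u_n-\tilde\psi_n={\mathcal P}_{D\setminus E_n}(u_n-\tilde\psi_n)$ passes to the limit. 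To pass to the limit in the weak formulation \eqref{eq:weak-navier}, given any test pair $(\tilde\o,\tilde b)$ and $\varphi\in\Hspace{E}{\tilde\o}{\tilde b}(D)$, the very same Bogovski construction produces $\varphi_n\in\Hspace{E_n}{\tilde\o}{\tilde b}(D)$ with $\varphi_n\to\varphi$ strongly in $H^1(D)$; the viscous term $\int_D e(u_n)\cdot e(\varphi_n)\,dx$ passes by weak--strong convergence, while the convective term
\[
\int_D (u_n\otimes u_n)\cdot e(\varphi_n)\,dx\longrightarrow \int_D(u\otimes u)\cdot e(\varphi)\,dx
\]
because $u_n\otimes u_n\to u\otimes u$ strongly in $L^2(D)$ thanks to the strong $L^4$ convergence noted above.

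Finally, weak lower semicontinuity of $\|e(\cdot)\|_{L^2(D)}$ gives $\|e(u)\|_{L^2(D)}\le M'_{D,m}$, so $u$ lies in the uniqueness class of Theorem~\ref{thm.qs} under the standing smallness hypothesis on $\psi$; hence $u=u_E$, $(\o,b)=(\o_E,b_E)$, and the whole sequence converges. The main obstacle is the passage to the limit in the nonlinear convective term: this is precisely why we require the compact embedding $H^1\hookrightarrow L^4$, which together with the two-dimensional character of Šverák's stability for divergence-free projections is what forces the restriction to the plane.
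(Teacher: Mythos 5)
Your proposal is correct and follows essentially the same route as the paper: reduce the optimization to a stability statement for self-equilibrated Navier--Stokes flows under Hausdorff convergence of obstacles, obtain uniform bounds on $u_{E_n}$ and $(\o_{E_n},b_{E_n})$ from Remark~\ref{rem:uniform} and the uniform Korn inequality, identify the admissibility class of the weak limit via \v{S}ver\'ak's projection convergence as in Step~1 of Theorem~\ref{thm:stability}, pass to the limit in the convective term through the compact embedding $H^1\hookrightarrow L^4$, and invoke the uniqueness of Theorem~\ref{thm.qs} (using $\|e(u)\|_{L^2(D)}\le M'_{D,m}$ by weak lower semicontinuity) to conclude $u=u_E$. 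The only cosmetic difference is that you build approximating test fields $\varphi_n\in\Hspace{E_n}{\tilde\o}{\tilde b}(D)$ via Bogovski, whereas the paper simply observes that a test field equal to the rigid motion on a neighborhood $U$ of $E$ is already admissible for $E_n\subset U$ when $n$ is large.
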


\begin{proof}
The proof of the optimization result of Theorem \ref{thm:shape} for the Stokes case can be adapted to the present setting provided that we establish the stability of self-equilibrated solutions under convergence of the obstacles.
\par
Assume that $E_n,E\in \mathcal{E}_m$ with
$$
E_n\to E\qquad\text{in the Hausdorff topology.}
$$
For notational simplicity, let us set $u_{E_n} :=u_n$, $\o_{E_n}:=\o_n$, $b_{E_n}:=b_n$.
We need to check that
\begin{equation}
\label{eq:weakunN}
u_{n}\weak u_E\qquad\text{weakly in }H^1(D;\R^2)
\end{equation}
and
\begin{equation}
\label{eq:rigidN}
\o_{n}\to \o_E \qquad\text{and}\qquad b_{n}\to b_E.
\end{equation}
By Remark \ref{rem:uniform} $(u_n)_{n\in\N}$ is bounded in $H^1(D)$ so that, up to a subsequence
$$
u_n\weak u\qquad\text{weakly in }H^1(D).
$$
Thus $\|e(u)\|_{L^2(D)}\le  M'_{D,m}$, where $ M'_{D,m}$ is the constant given by Theorem \ref{thm.qs}  and Remark \ref{rem:uniform}. 
\par
By Lemma \ref{lem:Korn}, $u_n-\psi+\o_n\wedge (x-x_*)$ is bounded in $H^1(D)$ as well, so that,
taking the trace on $\partial D$, we infer that, up to a subsequence,
\begin{equation}
\label{eq:onbn}
\o_n\to \o\qquad\text{and}\qquad b_n\to b.
\end{equation}
Following the arguments of Step 1 in the proof of Theorem \ref{thm:stability}, we deduce that
\ $u-\psi\in \Hspace{E}{\o}{b}(D).$

We finally prove that \eqref{eq:weak-navier} holds true for $u$.
This shows that $u=u_E$, and consequently $\o=\o_E$ and $b=b_E$; \eqref{eq:weakunN} and \eqref{eq:rigidN} follow.
\par
It is not restrictive to assume that $\varphi=\tilde \o \vec e_3\wedge (x-x_*)$ on a neighborhood $U$ of $E$, so that,  since $E_n\subset U$ for $n$ large enough thanks to Hausdorff convergence, 
$$
 \varphi \in \Hspace{E_n}{\tilde \o}{\tilde b}(D). 
$$
 In view of the compact embedding of $H^1(D)$ into $L^4(D)$,  
$$
u_n\otimes u_n \to u\otimes u\qquad\text{strongly in }L^2(D).
$$
 and we obtain
\begin{multline*}
2\mu \int_{D} e(u) \cdot e(\varphi)\,dx-\rho\int_{D} (u\otimes u) \cdot e(\varphi)\,dx\\
=\lim_{n\to +\infty}\left( 2\mu \int_{D} e(u_n) \cdot e(\varphi)\,dx-\rho\int_{D} (u_n\otimes u_n) \cdot e(\varphi)\,dx
\right)=0
\end{multline*}
Thus \eqref{eq:weak-navier} holds true.

\end{proof}

\begin{remark} In contrast with the mobility setting of Subsection \ref{subsec:stability}, we cannot in the present quasi-static setting conclude to the existence of optimizers as in Theorem \ref{thm:shape-navier} under the conditions detailed in Remark \ref{rem:constraint} because   the proof of the convergences \eqref{eq:onbn} is  conditional upon the volume constraint $\mathcal{L}^2(E_n) \ge m$. The proof of  the corresponding property \EEE in the stability  result Theorem \ref{thm:stability}, that is Step 2 of that proof, uses linearity, a feature no longer  available in the present setting. Also,  our proof of  the $E$-independence of the $H^1$-bound on the solution to the quasi-static problem  when that obstacle varies  is also based on the volume constraint.  
\hfill\P\end{remark}

\begin{remark}\label{rem.wzm}
  In the introductory remarks, we alluded to a possible solving of the quasi-static problem of self-equilibration in a (non inertial) reference frame attached to the obstacle.  Indeed, one could perform an analysis of the resulting model along similar lines, the main difference being in the details of the proof of Theorem \ref{thm.qs}.  Under assumption \eqref{eq.fo}, a theorem identical to Theorem \ref{thm:shape-navier} would be produced.
\hfill\P\end{remark}\EEE

\section{A few explicit examples}\label{sec.example}

In this final section we propose to explicitly compute $\o_E,b_E$ for the mobility problem when the obstacle $E$ has three specific shapes and when dealing with a 2d pure shear flow, that is when 
$$
\psi (x) :=Ax \text{ on $\p D$}, \qquad  A=\begin{pmatrix}0&1\\0&0\end{pmatrix}, \qquad x:=(x_1,x_2).
$$

\medskip

Our first computation is when the domain $D$ is a large open disk of radius $R>1$ centered at $0$ and the obstacle $E$ is a horizontal needle (the line segment $\ell:=[-1,1]\times\{0\}$). Then the solution to the Stokes system with $u=0$ on $x_2=0$ is $u=(x_2,0), p=0$. But then 
$$
\sigma= 2\mu e(u)-p\;\!\id=\mu  \begin{pmatrix}0&1\\1&0\end{pmatrix}
$$ 
and thus $\sigma\nu= \mu/{R}\; (x_2,x_1)$ on $\p D$. Consequently,
$$
\int_{\p D}\sigma\nu\ d\hh=\int_{\p D}x\wedge\sigma\nu\ d\hh=0
$$
so that we obtain that $\o_{ E }=b_{ E }=0$. Since, according to Remark \ref{rem:constraint}, a horizontal needle is a valid candidate for velocity minimization,  $\ell$ does indeed provide a minimizer for any  non-negative function  $f$ of $(\o,b)$ with $f(0,0)=0$. Note that we might as well have chosen any countable union of line segments of the form $[\alpha_i,\alpha_{i+1}]\times \{0\}, i=1,....$ with $\alpha_i$ strictly increasing and $-1<\alpha_1<\lim_i\alpha_i<1$. The result would be identical. 

Our second computation is the same setting, but for the fact that the needle $E$ is vertical (the line segment $\ell:=\{0\}\times[-1,1]$). Then the solution to the Stokes system with $u=(x_2,0)$ on $x_1=0$ is $u=(x_2,0), p=0$. But then  we are back to the computations of the first example. In this case we get that $\o_{ E }=-1, b_{ E }=0$.  Since, according to Remark \ref{rem:constraint}, a vertical needle is also a valid candidate for velocity minimization,  $\ell$ does indeed provide a possible candidate for the maximization of $|\omega|$. 

These elementary explicit solutions are for obstacles without volume and are hence physically unrealistic. Whether $|\o|$ can attain or exceed the value $1$ among smooth head-tail symmetric obstacles in a 2d pure shear flow at infinity is presently undecided, although such shapes have been hypothesized on unbounded domains \cite{Bretherton1962,SKS2013}. In that setting, if there exists a geometry such that $\o = 0$, then a pure rotation of that geometry yields another obstacle with $|\o| \geq 1$.

Our third computation is that of an obstacle $E=\bar B'$ where $B'$ is a small disk  of radius $R'\le 1$ centered at $0$ with $D$ still a larger disk as in the previous example. We rephrase the problem in polar coordinates and seek a solution to the Stokes problem with 
$$
\begin{cases}
u= R'\o(0,1)&\mbox{ on } \p B'\\[3mm]
u=R(\sin\theta\cos\theta,-\sin^2\theta) &\mbox{ on } \p B
\end{cases}
$$
in the basis $\vec{e}_r,\vec{e}_\theta$. We seek a solution to the Stokes system of the form 
$$
u= {\rm Re}\EEE\left(\sum_{k=0}^\infty \huk(r) e^{ik\theta}\right), \qquad  p= {\rm Re}\EEE\left(\sum_{k=0}^\infty \hat p_k(r) e^{ik\theta}\right)
$$
with $\huk(r):=(\hur{k}(r),\hut{k}(r))$.
Using the orthogonality of the Fourier modes and the expression for the Cauchy stress in polar coordinates
leads to the following expressions for \eqref{eq.selfequi}:
\beq\label{eq.sefpol}
F:=\int_{\p E}\sigma\nu d\hh=-\pi\left(\begin{array}{c} \dis  {\rm Re} \EEE[-R'\hat p_1+2R'\mu \hur{1}']- \mu \EEE  {\rm Im} \EEE[i\hur{1}+R'\hut{1}'-\hut{1}]\\
[3mm]\dis  {\rm Im} \EEE[-R'\hat p_1+2R'\mu \hur{1}']+ \mu \EEE  {\rm Re} \EEE[i\hur{1}+R'\hut{1}'-\hut{1}] \end{array}\right)\eeq

\beq\label{eq.setpol}
T:=\int_{\p E}x\wedge\sigma\nu d\hh=-\pi  \mu \EEE  {\rm Re} \EEE[R'^2\hut{0}'-R'\hut{0}]\vec{e}
\eeq
where $\vec{e}$ is perpendicular to the plane.

Now, it is easily seen that the $0$-mode $\hut{0}$ satisfies
$$
\frac1r (r\hut{0}')'-\frac{\hut{0}}{r^2}=0,
$$
and thus that the solution is of the form $\hut{0}(r)=A/r+Br$. The boundary conditions on $\hut{0}$ are
$$
\begin{cases}A/R'+BR'=R'\o\\\dis {A/R}+BR=-R\fint_0^{2\pi}\sin^2\theta d\theta.\end{cases}
$$
From this we see the associated torque $T$ in \eqref{eq.setpol} is $4\pi A$, so that it can only be $0$  if  $A=0$. In that case $B=\o$ and  the second equation in the previous $2\times2$ system yields
$\o=-1/2.$ We do not need to explicitly compute the solutions $\hur{1}, \hut{1}$ that enter the expression for the resultant force $F$ in \eqref{eq.sefpol} since we know from Theorem \ref{thm.G} that there is a (classical Galdi-type) solution for the self-equilibrated problem in the current setting.

\medskip
The three situations described above are unfortunately the only ones for which an analytical solution is readily derived. The results ($\o=0$ for a horizontal needle, $\o=-1/2$ for a disk and $\o=-1$ for a vertical needle) lead us to put forth the following conjecture for the mobility problem:

\medskip

\noindent
For all obstacles within the class defined by \eqref{eq.domain-bis} in a simple shear flow $\psi(x) = (x_2, 0)$, the angular velocity satisfies $\o \leq 0$. Moreover, the degenerate vertical ellipse realizes the maximum of $|\o|$ within this class.

\section*{Statement and declarations:}

A.G. acknowledges support from PRIN 2022 (Project no. 2022J4FYNJ), funded by MUR, Italy, and the European Union -- Next Generation EU, Mission 4 Component 1 CUP F53D23002760006.
He is also member of the Gruppo Nazionale per l’Analisi Matematica, la Probabilit\`a e le loro Applicazioni (GNAMPA) of the Istituto Nazionale di Alta Matematica (INdAM). 

\par  Data sharing not applicable to this article as no datasets were generated or analyzed during the current study.

\end{document}